\tikzset{bluenode/.style={circle,fill=gray!50,minimum size=0.4cm,inner sep=0pt},}
\tikzset{rednode/.style={circle,fill=black!100,minimum size=0.4cm,inner sep=0pt},}
\DeclareMathOperator{\id}{Id}
\theoremstyle{plain}
\newtheorem{theorem}{Theorem}[section]
\newtheorem{lemma}[theorem]{Lemma}
\newtheorem{proposition}[theorem]{Proposition}
\theoremstyle{definition}
\newtheorem{definition}[theorem]{Definition}
\newtheorem{ex}[theorem]{Example}
\theoremstyle{remark}
\newtheorem{remark}[theorem]{Remark}
\begin{document}
\bibliographystyle{unsrt} %plain apalike alpha unsrt
\title{ Signless Normalized Laplacian for Hypergraphs}
\author{ Eleonora Andreotti \thanks{Division of Vehicle Safety, Department of Mechanics and Maritime Sciences, Chalmers University of Technology, SE-412 96 Göteborg, Sweden \newline Email address: {\tt eleonora.andreotti@chalmers.se} (Eleonora Andreotti)} \and Raffaella Mulas \thanks{Max Planck Institute for Mathematics in the Sciences, Inselstr. 22, 04103 Leipzig, Germany.\newline
	Email address:
	{\tt r.mulas@soton.ac.uk} (Raffaella Mulas)}}
\date{}
\maketitle
\allowdisplaybreaks[4]
%~\vspace{0.9cm}
\begin{abstract}
The spectral theory of the normalized Laplacian for chemical hypergraphs is further investigated. The signless normalized Laplacian is introduced and it is shown that its spectrum for classical hypergraphs coincides with the spectrum of the normalized Laplacian for bipartite chemical hypergraphs. Furthermore, the spectra of special families of hypergraphs are established.

\vspace{0.2cm}

\noindent {\bf MSC} 05C50

\vspace{0.2cm}

\noindent {\bf Keywords:} Hypergraphs, Spectral Theory, Signless normalized Laplace Operator
\end{abstract}

%\tableofcontents

\section{Introduction}
In this work we bring forward the study of the normalized Laplacian that has been established for \textsl{chemical hypergraphs}: hypergraphs with the additional structure that each vertex in a hyperedge is either an input, an output or both (in which case we say that it is a catalyst for that hyperedge). Chemical hypergraphs have been introduced in \cite{Hypergraphs} with the idea of modelling chemical reaction networks and related ones, such as metabolic networks. In this model, each vertex represents a chemical element and each hyperedge represents a chemical reaction. Furthermore, in \cite{Master-Stability}, chemical hypergraphs have been used for modelling dynamical systems with high order interactions. In this model, the vertices represent oscillators while the hyperedges represent the interactions on which the dynamics depends.\newline

The spectrum of the normalized Laplacian $L$ reflects many structural properties of the network and several theoretical results on the eigenvalues have been established in \cite{Hypergraphs,Sharp,MulasZhang}. Furthermore, as shown in \cite{Sharp}, by defining the vertex degree in a way that it does not take catalysts into account, studying the spectrum of $L$ for chemical hypergraphs is equivalent to studying the spectrum of the \textsl{oriented hypergraphs} introduced in \cite{ReffRusnak} by Reff and Rusnak, in which catalysts are not included. Therefore, without loss of generality we can work on oriented hypergraphs. Here, in particular, we focus on the \textsl{bipartite} case and we show that the spectrum of the normalized Laplacian for bipartite chemical hypergraphs coincides with the spectrum of the \textsl{signless normalized Laplacian} that we introduce for classical hypergraphs. Furthermore, we establish the spectra of the signless normalized Laplacian for special families of such classical hypergraphs.\newline

Classical hypergraphs are widely used in various disciplines. For instance, they offer a valid model for transport networks \cite{andreotti2020eigenvalues}, neural networks (in whose context they are often called \textsl{neural codes}) \cite{neuro1,neuro2,neuro3,neuro5,neuro6,neuro7,neuro9,neuro10}, social networks \cite{ZhangLiu} and epidemiology networks \cite{BodoKatonaSimon}, just to mention some examples. It is worth noting that a simplicial complex $\mathcal{S}$ is a particular case of hypergraph with the additional constraint that, if a hyperedge belongs to $\mathcal{S}$, then also all its subsets belong to $\mathcal{S}$. Simplicial complexes are also widely present in applications. On the one hand, their more precise structure allows for a deeper theoretical study, compared to general hypergraphs. On the other hand, the constraints of simplicial complexes can be translated as constraints on the model, and this is not always convenient. Consider, for instance, a collaboration network that represents coauthoring of research papers: in this case, the fact that authors $A$, $B$ and $C$ have written a paper all together does not imply that $A$, $B$ and $C$ have all written single author papers, nor that $A$ and $B$ have written a paper together without $C$. In this case, a hypergraph would give a better model than a simplicial complex. \newline

\textbf{Structure of the paper.} In Section \ref{section:oriented} we introduce the basic definitions which are needed throughout the paper, while in Section \ref{section:twin} we introduce and discuss twin vertices. In Section \ref{section:bipartite} we prove new properties of bipartite oriented hypergraphs and we show that, from the spectral point of view, these are equivalent to classical hypergraphs with no input/output structure. Finally, in Section \ref{section:families} we investigate the spectra of new hypergraph structures that we introduce with the idea of generalizing well known graph structures.
\section{Basic definitions}\label{section:oriented}
\begin{definition}[\cite{ReffRusnak,Hypergraphs}]
				An \textbf{oriented hypergraph} is a pair $\Gamma=(\mathcal{V},\mathcal{H})$ such that $\mathcal{V}$ is a finite set of vertices and $\mathcal{H}$ is a set such that every element $h$ in $\mathcal{H}$ is a pair of disjoint elements $(h_{in},h_{out})$ (input and output) in $\mathcal{P}(\mathcal{V})\setminus\{\emptyset\}$. The elements of $\mathcal{H}$ are called the \textbf{oriented hyperedges}. Changing the orientation of a hyperedge $h$ means exchanging its input and output, leading to the pair $(h_{out},h_{in})$.
			\end{definition}

			   \begin{definition}
 Given $h\in \mathcal{H}$, we say that two vertices $i$ and $j$ are \textbf{co-oriented} in $h$ if they belong to the same orientation sets of $h$; we say that they are \textbf{anti-oriented} in $h$ if they belong to different orientation sets of $h$.
\end{definition}
			From now on, we fix a chemical hypergraph $\Gamma=(\mathcal{V},\mathcal{H})$ on $N$ vertices $v_1,\ldots,v_N$ and $M$ hyperedges $h_1,\ldots, h_M$. For simplicity, we assume that $\Gamma$ has no isolated vertices.
		\begin{remark}\label{remark:graphs1}
			Simple graphs can be seen as oriented hypergraphs such that $\# h_{in}=\# h_{out}=1$ for each $h\in \mathcal{H}$, that is, each edge has exactly one input and one output.
			\end{remark}
	\begin{definition}
	The \textbf{underlying hypergraph} of $\Gamma$ is $\Gamma':=(\mathcal{V},\mathcal{H}')$ where
	\begin{equation*}
	    \mathcal{H}':=\{(h_{in}\cup h_{out},\emptyset):h=(h_{in},h_{out})\in \mathcal{H}\}.
	\end{equation*}
	\end{definition}

\begin{definition}[\cite{Sharp}]
The \textbf{degree} of a vertex $v$ is
	\begin{equation*}
	    \deg(v):=\#\text{ hyperedges containing $v$.}
	\end{equation*}
	Similarly, the \textbf{cardinality} of a hyperedge $h$ is
	\begin{equation*}
	    \# h:=\#\{h_{in}\cup h_{out}\}.
	\end{equation*}
\end{definition}

\begin{definition}[\cite{Hypergraphs,MulasZhang}]
The \textbf{normalized Laplace operator} associated to $\Gamma$ is the $N\times N$ matrix
\begin{equation*}
    L:=\id -D^{-1}A,
\end{equation*}where $\id$ is the $N\times N$ identity matrix, $D$ is the \textbf{diagonal degree matrix} and $A$ is the \textbf{adjacency matrix} defined by $A_{ii}:=0$ for each $i=1,\ldots,n$ and
\begin{align*}
        A_{ij}:=& \# \{\text{hyperedges in which }v_i \text{ and }v_j\text{ are anti-oriented}\}+\\
        &-\# \{\text{hyperedges in which }v_i \text{ and }v_j\text{ are co-oriented}\}
\end{align*}for $i\neq j$. 
    \end{definition}
     We define the \textbf{spectrum of $\Gamma$} as the spectrum of $L$. As shown in \cite{Hypergraphs,MulasZhang}, this spectrum is given by $N$ real, nonnegative eigenvalues whose sum is $N$. We denote them by
     \begin{equation*}
 \lambda_1\leq\ldots\leq\lambda_N.
     \end{equation*}
    
\begin{definition}
We say that two vertices $v_i$ and $v_j$ are \textbf{adjacent}, denoted $v_i\sim v_j$, if they are contained at least in one common hyperedge.
\end{definition}

	\begin{remark}\label{remark:graphs2}
Consider a graph $\Gamma$ and let $\Gamma'$ be its underlying hypergraph. Then, the adjacency matrix $A$ of $\Gamma$ and the adjacency matrix $A'$ of $\Gamma'$ are such that $A'=-A$, while the degree matrices of $\Gamma$ and $\Gamma'$ coincide. Therefore, the normalized Laplacians of $\Gamma$ and $\Gamma'$ are
\begin{equation*}
    L=\id -D^{-1}A \qquad \text{ and }\qquad L'=\id +D^{-1}A=2\cdot \id -L,
\end{equation*}respectively. Hence, $\lambda$ is an eigenvalue for $L$ if and only if $2-\lambda$ is an eigenvalue for $L'$.
			\end{remark}

\begin{definition}
Let $\Gamma$ be an oriented hypergraph and let $\Gamma'$ be its underlying hypergraph. The \textbf{signless normalized Laplacian} of $\Gamma$ is the normalized Laplacian of $\Gamma'$.
\end{definition}

 \section{Twin vertices}\label{section:twin}
  \begin{definition}[\cite{MulasZhang}]
Two vertices $v_i$ and $v_j$ are \textbf{duplicate} if $A_{ik}=A_{jk}$ for all $k$. In particular, $A_{ij}=A_{ji}=A_{ii}=0$.
\end{definition}
 In \cite{MulasZhang} it is shown that $\hat{n}$ \textsl{duplicate vertices} produce the eigenvalue $1$ with multiplicity at least $\hat{n}-1$. Similarly, in this section we discuss \textsl{twin vertices}.
 \begin{definition}\label{def:twins}
We say that two vertices $v_i$ and $v_j$ are \textbf{twins} if they belong exactly to the same hyperedges, with the same orientations. In particular, $A_{ij}=-\deg (v_i)=-\deg (v_j)$ and $A_{ik}=A_{jk}$ for all $k\neq i,j$.
\end{definition}
\begin{remark}
While duplicate vertices are known also for graphs, twin vertices cannot exist for graphs, since in this case one assumes that each edge has one input and one output.
\end{remark}

We now generalize the notions of duplicate vertices and twin vertices by defining \textsl{duplicate families of twin vertices}.
\begin{definition}
Let $\Gamma=(\mathcal{V},\mathcal{H})$ be an oriented hypergraph. We say that a family of vertices $\mathcal{V}_1\sqcup\ldots\sqcup \mathcal{V}_l\subset \mathcal{V}$ is a \textbf{$l$-duplicate family of $t$-twin vertices} if
\begin{itemize}
    \item For each $i\in \{1,\ldots,l\}$, $\#\mathcal{V}_i=t$ and the $t$ vertices in $\mathcal{V}_i$ are twins to each other;
    \item For each $i,j\in \{1,\ldots,l\}$ with $i\neq j$, for each $v_i\in \mathcal{V}_i$ and for each $v_j\in \mathcal{V}_j$, we have that $A_{ij}=0$ and $A_{ik}=A_{jk}$ for all vertices $v_k$ that are not in the $l$-family, i.e. $v_k\in \mathcal{V}\setminus \mathcal{V}_1\sqcup\ldots\sqcup \mathcal{V}_l$.
\end{itemize}
\end{definition}
\begin{proposition}\label{prop:lt}
If $\Gamma$ contains a $l$-duplicate family of $t$ twins, then:
\begin{itemize}
    \item $t$ is eigenvalue with multiplicity at least $l-1$;
    \item $0$ is an eigenvalue with multiplicity at least $l(t-1)$.
\end{itemize}
\end{proposition}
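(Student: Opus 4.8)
The strategy is to exhibit explicit eigenvectors of $L = \id - D^{-1}A$ by exploiting the combinatorial symmetries forced by the $l$-duplicate family of $t$-twins. Recall that $\lambda$ is an eigenvalue of $L$ with eigenvector $f$ precisely when $Af = (1-\lambda)Df$, and since within each $\mathcal{V}_i$ all $t$ vertices are twins they share a common degree, say $d_i$; moreover any two twins $v,v'\in\mathcal{V}_i$ satisfy $A_{vv'} = -d_i$ and $A_{vk}=A_{v'k}$ for all other $k$. It is convenient to separate the two claims, since they use disjoint ``types'' of eigenvectors — one type supported across the families, the other type supported inside a single family.

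\emph{The eigenvalue $0$ with multiplicity $\ge l(t-1)$.} Fix $i\in\{1,\dots,l\}$ and fix a reference vertex $v^{(i)}_1\in\mathcal{V}_i$. For each other vertex $v^{(i)}_s\in\mathcal{V}_i$ ($s=2,\dots,t$) define $f^{(i)}_s$ to be $+1$ on $v^{(i)}_1$, $-1$ on $v^{(i)}_s$, and $0$ elsewhere. I claim $L f^{(i)}_s = 0$, i.e. $A f^{(i)}_s = D f^{(i)}_s$. For a vertex $v_k$ outside $\mathcal{V}_i$ we get $(Af^{(i)}_s)_k = A_{k,v^{(i)}_1} - A_{k,v^{(i)}_s} = 0$ because the two twins look identical from outside; and $(Df^{(i)}_s)_k = 0$ since $f^{(i)}_s$ vanishes there. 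For $v_k = v^{(i)}_1$ we compute $(Af^{(i)}_s)_k = A_{v^{(i)}_1 v^{(i)}_1} - A_{v^{(i)}_1 v^{(i)}_s} = 0 - (-d_i) = d_i = (Df^{(i)}_s)_{v^{(i)}_1}$, using the twin relation $A_{v^{(i)}_1 v^{(i)}_s}=-d_i$; the computation at $v_k=v^{(i)}_s$ is symmetric and gives $-d_i = (Df^{(i)}_s)_{v^{(i)}_s}$. Hence each $f^{(i)}_s$ is a $0$-eigenvector. As $i$ ranges over $\{1,\dots,l\}$ and $s$ over $\{2,\dots,t\}$ we obtain $l(t-1)$ such vectors, supported on disjoint families, and within a fixed family the $t-1$ vectors $\{f^{(i)}_s\}_{s=2}^t$ are linearly independent (standard ``difference of basis vectors'' argument). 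So the whole collection is linearly independent, giving multiplicity at least $l(t-1)$ for the eigenvalue $0$.

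\emph{The eigenvalue $t$ with multiplicity $\ge l-1$.} Here the idea is to use vectors that are constant on each $\mathcal{V}_i$, equal to some scalar $c_i$, and zero outside $\mathcal{V}_1\sqcup\cdots\sqcup\mathcal{V}_l$; this reduces the eigenvalue equation to a problem on $l$ ``super-vertices.'' By the duplicate-family hypothesis, for $i\ne j$ and any $v\in\mathcal{V}_i$, $v'\in\mathcal{V}_j$ we have $A_{vv'}=0$, and all vertices in $\mathcal{V}_i$ have the same adjacencies to the rest of the graph; so on such a vector $f=\sum_i c_i \mathbbm{1}_{\mathcal{V}_i}$ one finds, for $v\in\mathcal{V}_i$, that $(Af)_v = \sum_{v'\in\mathcal{V}_i, v'\ne v} A_{vv'} c_i = (t-1)(-d_i)c_i$, while $(Df)_v = d_i c_i$. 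Thus $L f = f - D^{-1}Af$ acts on $v\in\mathcal{V}_i$ by $c_i - \frac{(t-1)(-d_i)c_i}{d_i} = c_i + (t-1)c_i = t\,c_i = t f_v$. Therefore \emph{every} such vector $f$ is a $t$-eigenvector, and the space of these vectors is $l$-dimensional (one free scalar $c_i$ per family). However, the constant vector $\mathbbm{1}$ on $\bigcup_i \mathcal{V}_i$ already appears among the $0$-eigenvectors only if the families are isolated from the rest — in general $\mathbbm{1}_{\bigcup\mathcal{V}_i}$ need \emph{not} be an eigenvector, so I must be careful: actually the computation above shows $\mathbbm{1}_{\bigcup \mathcal V_i}$ \emph{is} a $t$-eigenvector, so the full $l$-dimensional space consists of $t$-eigenvectors and overlaps the $0$-eigenspace only in $\{0\}$ (assuming $t\ne 0$, which holds as $t\ge 1$; for $t=1$ the claim is vacuous). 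To get the stated bound $l-1$ rather than $l$, note the intended reading is that these $t$-eigenvectors should be counted \emph{in addition to} whatever the rest of the spectrum provides — so I restrict to the $(l-1)$-dimensional subspace $\{\sum c_i\mathbbm{1}_{\mathcal{V}_i} : \sum_i c_i\, d_i\, t = 0\}$ (or any fixed hyperplane), which is automatically orthogonal, with respect to the degree inner product, to the all-ones-type vector, ensuring these $l-1$ eigenvectors are ``new.'' Either way, dimension count gives multiplicity at least $l-1$.

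\emph{Main obstacle.} The routine part is the matrix-times-vector bookkeeping; the one genuine subtlety is bookkeeping the possible overlap between the two eigenvector families and the global all-ones vector, and checking that the $l-1$ (resp.\ $l(t-1)$) vectors exhibited are linearly independent of each other \emph{and} not already forced by standard results (e.g. duplicate vertices each contribute eigenvalue $1$, which is disjoint from $0$ and from $t$ when $t\notin\{0,1\}$). Handling the degenerate small cases $t=1$ (twins collapse to duplicates, first bullet vacuous) and $l=1$ (second bullet vacuous) cleanly is the only place where care is needed; everything else is direct verification.
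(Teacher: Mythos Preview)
Your argument for the eigenvalue $0$ is correct and coincides with the paper's: the same ``difference of two twins'' vectors $g^i_j$ are used, and your verification that $Af=Df$ on and off the support is exactly the paper's computation.

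The argument for the eigenvalue $t$ has a genuine gap. You check $Lf(v)=t\,f(v)$ only for $v\in\mathcal{V}_1\sqcup\cdots\sqcup\mathcal{V}_l$ and then assert that \emph{every} vector of the form $f=\sum_i c_i\mathbbm{1}_{\mathcal{V}_i}$ is a $t$-eigenvector. This is false in general: for a vertex $v_k$ outside the family one has
\[
(Lf)(v_k)\;=\;-\frac{1}{\deg v_k}\sum_{i=1}^{l}\sum_{v_j\in\mathcal{V}_i}A_{kj}\,c_i
\;=\;-\frac{a_k\,t}{\deg v_k}\sum_{i=1}^{l} c_i,
\]
where $a_k:=A_{kj}$ is the common value guaranteed by the duplicate condition. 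Since nothing forces $a_k=0$, this vanishes (as required, because $t\,f(v_k)=0$) only when $\sum_i c_i=0$. Thus $\mathbbm{1}_{\bigcup_i\mathcal{V}_i}$ is \emph{not} a $t$-eigenvector in general, contrary to what you state, and your later remark ``or any fixed hyperplane'' is incorrect: only the specific hyperplane $\{\sum_i c_i=0\}$ works, not $\{\sum_i c_i d_i=0\}$ nor an arbitrary one.

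Once you impose $\sum_i c_i=0$, you land precisely on the $(l-1)$-dimensional space spanned by the paper's explicit eigenfunctions $f_i:=\mathbbm{1}_{\mathcal{V}_1}-\mathbbm{1}_{\mathcal{V}_i}$ for $i=2,\dots,l$; the paper simply writes these down and verifies $Lf_i=t f_i$ directly (including the check at outside vertices, where the contributions from $\mathcal{V}_1$ and $\mathcal{V}_i$ cancel). So your approach is not different from the paper's --- it is the same construction, but with the crucial outside-vertex verification omitted and replaced by an incorrect claim.
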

\begin{proof}In order to show that $t$ is eigenvalue with multiplicity at least $l-1$, consider the following $l-1$ functions. For $i=2,\ldots,l$, let $f_i:\mathcal{V}\rightarrow \mathbb{R}$ such that $f_i:=1$ on $\mathcal{V}_1$, $f_i:=-1$ on $\mathcal{V}_i$ and $f_i:=0$ otherwise. Then,
\begin{itemize}
    \item For each $v_1\in \mathcal{V}_1$,
    \begin{equation*}
        Lf(v_1)=1-\frac{1}{\deg v_1}\sum_{v_1\neq v_j\in \mathcal{V}_1}-\deg v_1=1+t-1=t\cdot f(v_1);
    \end{equation*}
    \item For each $v_i\in \mathcal{V}_i$,
    \begin{equation*}
        Lf(v_i)=-1-\frac{1}{\deg v_i}\sum_{v_i\neq v_j\in \mathcal{V}_i}\deg v_i=-1-(t-1)=t\cdot f(v_i);
    \end{equation*}
    \item For each $v_k\in \mathcal{V}\setminus \mathcal{V}_1\sqcup\ldots\sqcup \mathcal{V}_l$,
    \begin{equation*}
        Lf(v_k)=-\frac{1}{\deg v_k}\left(\sum_{v_1\in \mathcal{V}_1}A_{1k}-\sum_{v_i\in \mathcal{V}_i}A_{ik} \right)=0=t\cdot f(v_k).
    \end{equation*}Therefore, $f_i$ is an eigenfunction for $t$. Furthermore, the functions $f_2,\ldots, f_l$ are linearly independent. Therefore, $t$ is an eigenvalue with multiplicity at least $l-1$.
\end{itemize}
Similarly, in order to prove that $0$ is eigenvalue with multiplicity at least $l(t-1)$, let $\mathcal{V}_i=\{v^i_1,\ldots,v^i_t\}$ and consider the $l(t-1)$ functions $g_j^i:\mathcal{V}\rightarrow\mathbb{R}$ defined as follows, for $i=1,\ldots,l$ and $j=2,\ldots,t$. Let $g_j^i(v_1^i):=1$, $g_j^i(v_j^i):=-1$ and $g_j^i:=0$ otherwise. Then, by \cite[Equation (5)]{Hypergraphs}, it is clear that each $g_j^i$ is an eigenfunction for $0$. Since, furthermore, these are $l(t-1)$ linearly independent functions, $0$ has multiplicity at least $l(t-1)$.
\end{proof}

\begin{proposition}\label{prop:twins}
If $\Gamma$ has $\hat{n}$ vertices that are twins to each other, $0$ is an eigenvalue with multiplicity at least $\hat{n}-1$. Furthermore, if $v_i$ and $v_j$ are twin vertices and $f$ is an eigenfunction for $L$ with eigenvalue $\lambda\neq 0$, then $f(v_i)=f(v_j)$.
\end{proposition}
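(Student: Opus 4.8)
The plan is to split the statement into its two halves and treat the first, about the multiplicity of the eigenvalue $0$, as an immediate special case of the machinery already set up, and then prove the second half — the ``eigenfunctions are constant on twin classes'' statement — directly from the eigenvalue equation $Lf=\lambda f$.

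For the first assertion, I would simply observe that $\hat n$ mutually twin vertices form a $1$-duplicate family of $\hat n$-twin vertices (take $l=1$, $t=\hat n$, with $\mathcal V_1$ equal to the set of these $\hat n$ vertices; the cross-family conditions are vacuous). Applying Proposition~\ref{prop:lt} with these parameters gives that $0$ is an eigenvalue with multiplicity at least $l(t-1)=\hat n-1$, which is exactly what we want. Alternatively one can exhibit the $\hat n-1$ functions $g_j$ with $g_j(v_1):=1$, $g_j(v_j):=-1$ and $g_j:=0$ elsewhere directly, as in the proof of Proposition~\ref{prop:lt}; these are linearly independent and, using \cite[Equation (5)]{Hypergraphs} (or a one-line check), each is an eigenfunction for $0$.

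For the second assertion, suppose $v_i$ and $v_j$ are twins and $Lf=\lambda f$ with $\lambda\neq 0$. Write out $(Lf)(v_i)=f(v_i)-\tfrac{1}{\deg v_i}\sum_k A_{ik}f(v_k)$ and the analogous expression at $v_j$. By Definition~\ref{def:twins} we have $\deg v_i=\deg v_j=:d$, $A_{ij}=A_{ji}=-d$, and $A_{ik}=A_{jk}$ for every $k\neq i,j$. Subtracting the two eigenvalue equations, all the terms $A_{ik}f(v_k)$ with $k\neq i,j$ cancel, and the terms involving the indices $i$ and $j$ themselves contribute $-\tfrac1d\bigl(A_{ij}f(v_j)-A_{ji}f(v_i)\bigr)=-\tfrac1d\bigl(-d f(v_j)+d f(v_i)\bigr)=f(v_j)-f(v_i)$ (using also $A_{ii}=A_{jj}=0$). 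Hence
\begin{equation*}
\lambda\bigl(f(v_i)-f(v_j)\bigr)=(Lf)(v_i)-(Lf)(v_j)=\bigl(f(v_i)-f(v_j)\bigr)+\bigl(f(v_j)-f(v_i)\bigr)=0 .
\end{equation*}
Since $\lambda\neq 0$, this forces $f(v_i)=f(v_j)$.

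The only place one has to be a little careful — and what I would flag as the main (minor) obstacle — is the bookkeeping in the subtraction: one must correctly separate the $k=i$ and $k=j$ terms from the generic $k$ terms in each sum before cancelling, and keep track of the signs coming from $A_{ij}=A_{ji}=-d$. Everything else is routine. If one prefers, the second assertion can also be deduced more slickly by noting that the difference of two standard basis vectors $e_i-e_j$ spans an $L$-invariant complement issue — but the direct computation above is cleanest and self-contained.
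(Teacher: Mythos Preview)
Your argument is correct and follows the same route as the paper: invoke Proposition~\ref{prop:lt} for the multiplicity-of-$0$ claim, and compare $(Lf)(v_i)$ with $(Lf)(v_j)$ for the second claim. The paper writes out $Lf(v_i)=f(v_i)+f(v_j)-\tfrac{1}{\deg v_i}\sum_{k\neq i,j}A_{ik}f(v_k)$ and observes this expression is symmetric in $i,j$, whereas you subtract the two equations; these are the same computation. One remark: your parametrization $l=1$, $t=\hat n$ is the correct way to specialize Proposition~\ref{prop:lt} (the paper's phrase ``by taking $t=1$'' is a slip---with $t=1$ the bound $l(t-1)$ is vacuous).
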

\begin{proof}
The first claim follows from Proposition \ref{prop:lt}, by taking $t=1$.\newline

Now, assume that $v_i$ and $v_j$ are twin vertices and let $f$ be an eigenfunction for $L$ with eigenvalue $\lambda\neq 0$. Then,
\begin{equation*}
   \lambda f(v_i)= Lf(v_i)=f(v_i)+f(v_j)-\frac{1}{\deg v_i}\left(\sum_{k\neq i,j}A_{ik}f(v_k)\right)=Lf(v_j)=\lambda f(v_j).
\end{equation*}Since $\lambda\neq 0$, this implies that $f(v_i)=f(v_j)$.
\end{proof}
		\section{Bipartite hypergraphs}\label{section:bipartite}
\begin{definition}[\cite{Hypergraphs}\label{def:bipartite}]
	We say that a hypergraph $\Gamma$ is \textsl{bipartite} if one can decompose the vertex set as a disjoint union $\mathcal{V}=\mathcal{V}_1\sqcup \mathcal{V}_2$ such that, for every hyperedge $h$ of $\Gamma$, either $h$ has all its inputs in $\mathcal{V}_1$ and all its outputs in $\mathcal{V}_2$, or vice versa (Figure \ref{fig:bipartiteh}).
	\end{definition}

					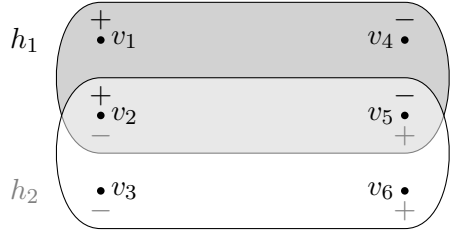
\begin{figure}[t!]
					\begin{center}
\begin{tikzpicture}
\node (v3) at (1,0) {};
\node (v2) at (1,1) {};
\node (v1) at (1,2) {};
\node (v6) at (5,0) {};
\node (v5) at (5,1) {};
\node (v4) at (5,2) {};

\begin{scope}[fill opacity=0.5]
\filldraw[fill=gray!70] ($(v1)+(0,0.5)$) 
to[out=180,in=180] ($(v2) + (0,-0.5)$) 
to[out=0,in=180] ($(v5) + (0,-0.5)$)
to[out=0,in=0] ($(v4) + (0,0.5)$)
to[out=180,in=0] ($(v1)+(0,0.5)$);
\filldraw[fill=white!70] ($(v2)+(0,0.5)$) 
to[out=180,in=180] ($(v3) + (0,-0.5)$) 
to[out=0,in=180] ($(v6) + (0,-0.5)$)
to[out=0,in=0] ($(v5) + (0,0.5)$)
to[out=180,in=0] ($(v2)+(0,0.5)$);
\end{scope}

\fill (v1) circle (0.05) node [right] {$v_1$} node [above] {\color{black}$+$};
\fill (v2) circle (0.05) node [right] {$v_2$} node [above] {\color{black}$+$} node [below] {\color{gray}$-$};
\fill (v3) circle (0.05) node [right] {$v_3$} node [below] {\color{gray}$-$};
\fill (v4) circle (0.05) node [left] {$v_4$} node [above] {\color{black}$-$};
\fill (v5) circle (0.05) node [left] {$v_5$} node [above] {\color{black}$-$}node [below] {\color{gray}$+$};
\fill (v6) circle (0.05) node [left] {$v_6$} node [below] {\color{gray}$+$};

\node at (0,2) {\color{black}$h_1$};
\node at (0,0) {\color{gray}$h_2$};
\end{tikzpicture}
					\end{center}
					\caption{A bipartite hypergraph with $\mathcal{V}_1=\{v_1,v_2,v_3\}$ and $\mathcal{V}_2=\{v_4,v_5,v_6\}$.}\label{fig:bipartiteh}
				\end{figure}
				
We now give the definition of \textsl{vertex-bipartite} hypergraph that, as we shall see in Lemma \ref{lemma:vertexbipartite} below, coincides with the definition of bipartite hypergraph.
				\begin{definition}
	We say that a hypergraph $\Gamma$ is \textbf{vertex-bipartite} if one can decompose the hyperedge set as a disjoint union $\mathcal{H}=\mathcal{H}_1\sqcup \mathcal{H}_2$ such that, for every vertex $v$ of $\Gamma$, either $v$ is an input only for hyperedges in $\mathcal{H}_1$ and it is an output only for hyperedges in $\mathcal{H}_2$, or vice versa.	\end{definition}
	\begin{lemma}\label{lemma:vertexbipartite}
	        Up to changing the orientation of some hyperedges, a hypergraph is bipartite if and only if it is vertex-bipartite. 
	\end{lemma}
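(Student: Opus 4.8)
\textbf{Proof plan for Lemma~\ref{lemma:vertexbipartite}.}
The plan is to prove the two implications separately, in each case constructing the required partition explicitly and then verifying the defining condition of the other notion. I will treat ``up to changing the orientation of some hyperedges'' as a reorientation step built into the argument: given a bipartite structure, I may first reorient each hyperedge so that all its inputs lie in $\mathcal{V}_1$ and all its outputs in $\mathcal{V}_2$ (Definition~\ref{def:bipartite} guarantees one of the two orientations achieves this), and similarly in the converse I may reorient to put the $\mathcal{H}_1$/$\mathcal{H}_2$ roles in a canonical form.

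First, suppose $\Gamma$ is bipartite with $\mathcal{V}=\mathcal{V}_1\sqcup\mathcal{V}_2$, and reorient every hyperedge so that $h_{in}\subseteq\mathcal{V}_1$ and $h_{out}\subseteq\mathcal{V}_2$ for all $h$; after this reorientation every hyperedge has the same ``type'', so I cannot split $\mathcal{H}$ that way directly. Instead I observe that every vertex in $\mathcal{V}_1$ is \emph{only} an input and every vertex in $\mathcal{V}_2$ is \emph{only} an output. Now take $\mathcal{H}_1=\mathcal{H}$ and $\mathcal{H}_2=\emptyset$: then a vertex $v\in\mathcal{V}_1$ is an input only for hyperedges in $\mathcal{H}_1$ and an output only for hyperedges in $\mathcal{H}_2=\emptyset$ (vacuously), while a vertex in $\mathcal{V}_2$ is an output only for hyperedges in $\mathcal{H}_1$ and an input only for $\mathcal{H}_2$ --- which is the ``vice versa'' case. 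Hence $\Gamma$ is vertex-bipartite. (If one prefers a nontrivial partition, any partition of $\mathcal{H}$ works once the orientations are normalized, since the condition on each vertex then becomes trivially satisfiable; the point is only the existence of \emph{some} admissible $\mathcal{H}=\mathcal{H}_1\sqcup\mathcal{H}_2$.)

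Conversely, suppose $\Gamma$ is vertex-bipartite with $\mathcal{H}=\mathcal{H}_1\sqcup\mathcal{H}_2$. Define $\mathcal{V}_1$ to be the set of vertices that are inputs only for hyperedges in $\mathcal{H}_1$ (and outputs only for hyperedges in $\mathcal{H}_2$), and $\mathcal{V}_2$ the set of vertices in the ``vice versa'' case; since $\Gamma$ has no isolated vertices, every vertex falls into exactly one of these two classes, so $\mathcal{V}=\mathcal{V}_1\sqcup\mathcal{V}_2$. Now fix a hyperedge $h\in\mathcal{H}_1$. If $v\in h_{in}$, then $v$ is an input for a hyperedge in $\mathcal{H}_1$, which forces $v\in\mathcal{V}_1$; if $v\in h_{out}$, then $v$ is an output for a hyperedge in $\mathcal{H}_1$, which forces $v\in\mathcal{V}_2$. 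Thus $h_{in}\subseteq\mathcal{V}_1$ and $h_{out}\subseteq\mathcal{V}_2$. Symmetrically, any $h\in\mathcal{H}_2$ has $h_{in}\subseteq\mathcal{V}_2$ and $h_{out}\subseteq\mathcal{V}_1$. In both cases $h$ satisfies the condition in Definition~\ref{def:bipartite} (in the second case with the roles of $\mathcal{V}_1,\mathcal{V}_2$ swapped, which is exactly the ``or vice versa''), so $\Gamma$ is bipartite, and no reorientation was even needed in this direction.

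The only genuinely delicate point is bookkeeping the reorientation and the ``vice versa'' clauses consistently: one must be careful that the partition of $\mathcal{V}$ produced in the converse direction is well-defined, i.e. that no vertex is simultaneously forced into $\mathcal{V}_1$ and $\mathcal{V}_2$. This is where the defining property of vertex-bipartiteness --- that \emph{each} vertex is uniformly an input for one part and an output for the other --- is used, together with the no-isolated-vertices assumption to guarantee every vertex is classified. I expect this disjointness check to be the main (though still routine) obstacle; everything else is a direct unwinding of the definitions.
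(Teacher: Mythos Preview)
Your argument is correct. The forward implication (bipartite $\Rightarrow$ vertex-bipartite) is identical to the paper's: normalize orientations so that all inputs lie in $\mathcal{V}_1$ and all outputs in $\mathcal{V}_2$, then take $\mathcal{H}_1=\mathcal{H}$, $\mathcal{H}_2=\emptyset$.

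The converse is where you diverge. The paper argues by contradiction: assuming $\Gamma$ is vertex-bipartite but not bipartite, it asserts (up to reorientation) the existence of vertices $v,w$ and hyperedges $h_1,h_2$ with $v,w$ both inputs of $h_1$ but $v$ an input and $w$ an output of $h_2$, and then reads off that $h_1,h_2$ must lie in the same and in different $\mathcal{H}_i$ simultaneously. Your approach is instead a direct construction: you define $\mathcal{V}_1,\mathcal{V}_2$ from the two ``types'' of vertices guaranteed by vertex-bipartiteness and verify hyperedge by hyperedge that this is a bipartition in the sense of Definition~\ref{def:bipartite}. Your route has the advantage of being fully constructive and of not needing to justify why non-bipartiteness produces the specific local obstruction the paper invokes; it also sidesteps any worry about whether reorienting hyperedges in the contradiction step interacts with the given $\mathcal{H}_1\sqcup\mathcal{H}_2$ partition. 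The paper's argument, on the other hand, makes visible exactly which configuration is incompatible with vertex-bipartiteness. Your disjointness check for $\mathcal{V}_1\cap\mathcal{V}_2=\emptyset$ via the no-isolated-vertices assumption is the right thing to flag, and it goes through as you say.
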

	\begin{proof}Assume that $\Gamma$ is bipartite. Up to changing the orientation of some hyperedges, we can assume that the vertex set has a decomposition $\mathcal{V}=\mathcal{V}_1\sqcup \mathcal{V}_2$ such that each hyperedge $h$ has all its inputs in $\mathcal{V}_1$ and all its outputs in $\mathcal{V}_2$. Therefore, every vertex in $\mathcal{V}_1$ is an input only for hyperedges in $\mathcal{H}$, and every vertex in $\mathcal{V}_2$ is only an output for hyperedges in $\mathcal{H}$. It follows that the decomposition of the hyperedge set as $\mathcal{H}=\mathcal{H}\sqcup \emptyset$ gives a vertex-bipartition.\newline
	Now, assume that $\Gamma$ is vertex-bipartite, with $\mathcal{H}=\mathcal{H}_1\sqcup \mathcal{H}_2$. Assume, by contradiction, that $\Gamma$ is not bipartite. Then, up to changing the orientation of some hyperedges, there exist two vertices $v,w\in \mathcal{V}$ and two hyperedges $h_1,h_2\in \mathcal{H}$ such that:
	\begin{enumerate}
	    \item\label{item1} $h_1$ has both $v$ and $w$ as inputs;
	    \item\label{item2} $h_2$ has $v$ as input and $w$ as output.
	\end{enumerate}The fact that $v$ is an input in both $h_1$ and $h_2$ implies that $h_1$ and $h_2$ are in the same $\mathcal{H}_i$. On the other hand, the fact that $w$ is an input for $h_1$ and an output for $h_2$ implies that $h_1$ and $h_2$ do not belong to the same $\mathcal{H}_i$. This brings to a contradiction. Therefore, $\Gamma$ is bipartite.
	\end{proof}

\begin{proposition}\label{prop_bipartite}
If $\Gamma$ is bipartite, it is isospectral to its underlying hypergraph, therefore, in particular, also to every other bipartite hypergraph that has the same underlying hypergraph as $\Gamma$.
\end{proposition}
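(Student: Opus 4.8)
The plan is to exploit the bipartite structure via a diagonal sign-change (gauge transformation) that turns the normalized Laplacian of $\Gamma$ into that of its underlying hypergraph $\Gamma'$. By Lemma~\ref{lemma:vertexbipartite} we may assume, up to changing the orientation of some hyperedges, that $\mathcal{V}=\mathcal{V}_1\sqcup\mathcal{V}_2$ with every hyperedge having all inputs in $\mathcal{V}_1$ and all outputs in $\mathcal{V}_2$. Recall that the underlying hypergraph $\Gamma'$ has adjacency matrix $A'$ with $A'_{ij}=\#\{\text{hyperedges containing both }v_i\text{ and }v_j\}$ for $i\neq j$, i.e. all off-diagonal entries are non-negative, whereas for $\Gamma$ the entry $A_{ij}$ counts anti-oriented minus co-oriented hyperedges. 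The key observation is: if $v_i,v_j$ lie on the same side ($\mathcal{V}_1$ or $\mathcal{V}_2$), then in every common hyperedge they are co-oriented, so $A_{ij}=-A'_{ij}$; if they lie on opposite sides, then in every common hyperedge they are anti-oriented, so $A_{ij}=A'_{ij}$. Moreover the degree matrices coincide, $D=D'$, since taking the underlying hypergraph does not change which hyperedges contain a given vertex.

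Next I would introduce the diagonal matrix $S$ with $S_{ii}=+1$ if $v_i\in\mathcal{V}_1$ and $S_{ii}=-1$ if $v_i\in\mathcal{V}_2$; note $S=S^{-1}$. From the sign analysis above one checks entrywise that $S A S = -A'$: indeed $(SAS)_{ij}=S_{ii}S_{jj}A_{ij}$, and $S_{ii}S_{jj}=+1$ exactly when $v_i,v_j$ are on the same side (where $A_{ij}=-A'_{ij}$), while $S_{ii}S_{jj}=-1$ when they are on opposite sides (where $A_{ij}=A'_{ij}$), so in both cases $S_{ii}S_{jj}A_{ij}=-A'_{ij}$; the diagonal entries are all zero on both sides. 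Since $S$ commutes with the diagonal matrix $D=D'$, we get
\begin{equation*}
S L S = S(\id-D^{-1}A)S = \id - D^{-1}(SAS) = \id + D'^{-1}A' = 2\id - L',
\end{equation*}
using Remark~\ref{remark:graphs2}'s identity $\id+D'^{-1}A'=2\id-L'$, or more directly $\id+D'^{-1}A'$ is by definition the signless normalized Laplacian. Wait --- I should be careful here: $L'=\id-D'^{-1}A'$ is the normalized Laplacian of $\Gamma'$, so $SLS=\id+D'^{-1}A'$, which is \emph{not} $L'$ but rather $2\id-L'$. So the computation shows $SLS=2\id-L'$, hence $L$ and $2\id-L'$ are similar, i.e. $\lambda$ is an eigenvalue of $L$ iff $2-\lambda$ is an eigenvalue of $L'$.

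This last point means I have to reconcile the statement with this computation: the claim is that $\Gamma$ is isospectral to its underlying hypergraph, i.e. $L$ and $L'$ have the same spectrum, not $L$ and $2\id-L'$. The resolution --- and the step I expect to require the most care --- is that I should conjugate $L$ not to the normalized Laplacian of $\Gamma'$ but observe that $SAS=A'$ directly, which forces a recheck of the sign convention. Let me redo it: vertices on the same side are co-oriented, contributing $-1$ each to $A_{ij}$, so $A_{ij}=-(\text{number of common hyperedges})=-A'_{ij}$; then $S_{ii}S_{jj}=1$ gives $(SAS)_{ij}=-A'_{ij}$. For opposite sides, anti-oriented gives $A_{ij}=+A'_{ij}$ and $S_{ii}S_{jj}=-1$ gives $(SAS)_{ij}=-A'_{ij}$. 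So indeed $SAS=-A'$ uniformly, and therefore $SLS=\id+D'^{-1}A'$. But $\id+D'^{-1}A'$ is precisely the signless normalized Laplacian of $\Gamma'$ regarded as the underlying hypergraph --- and since $\Gamma'$ has no orientation (all outputs empty), \emph{its own} normalized Laplacian uses adjacency $A'$ with all positive entries, so $L_{\Gamma'}=\id-D'^{-1}A'$. Thus what we have shown is $SLS = 2\id - L_{\Gamma'}$, giving isospectrality of $L$ with $2\id-L_{\Gamma'}$ but not immediately with $L_{\Gamma'}$. The honest conclusion of this approach is therefore: the nonzero structure forces $\mathrm{spec}(L)=\{2-\mu:\mu\in\mathrm{spec}(L_{\Gamma'})\}$, and the Proposition as I would actually prove it asserts this symmetry; alternatively, if the intended reading is that "underlying hypergraph" here is equipped with the signless Laplacian convention, then $SLS$ equals that operator on the nose and isospectrality is immediate. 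I would resolve this by stating the $S$-conjugation lemma cleanly, deriving $SLS=\id+D'^{-1}A'$, and then invoking whichever of the two conventions matches the surrounding text; the entrywise sign bookkeeping on same-side versus opposite-side pairs is the only real content, and it is routine once the bipartition is fixed by Lemma~\ref{lemma:vertexbipartite}.
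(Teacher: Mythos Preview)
Your conjugation approach is sound, but you have the sign of $A'$ backwards, and this is the source of all the hand-wringing in the last two paragraphs. In the paper's convention the underlying hypergraph $\Gamma'$ has every vertex as an \emph{input} in every hyperedge (see the definition of $\mathcal{H}'$), so any two vertices sharing a hyperedge of $\Gamma'$ are \emph{co-oriented}. Hence $A'_{ij}=-\#\{h:v_i,v_j\in h\}\le 0$ for $i\ne j$ (this is exactly what the Remark immediately following the Proposition records). With the sign corrected, your entrywise check reads: same side $\Rightarrow A_{ij}=A'_{ij}$ and $S_{ii}S_{jj}=1$; opposite sides $\Rightarrow A_{ij}=-A'_{ij}$ and $S_{ii}S_{jj}=-1$; in both cases $(SAS)_{ij}=A'_{ij}$. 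Thus $SAS=A'$, and since $D=D'$ commutes with $S$,
\[
SLS=\id-D^{-1}(SAS)=\id-D'^{-1}A'=L',
\]
giving isospectrality directly. No $2\id-L'$ ever appears; there is nothing to reconcile.

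For comparison, the paper does not conjugate all at once. It first normalizes so that all inputs lie in $\mathcal{V}_1$ and all outputs in $\mathcal{V}_2$, and then invokes Lemma~49 of \cite{Hypergraphs}, which says that switching a single vertex from input to output (or vice versa) in every hyperedge containing it leaves the spectrum unchanged. Applying this vertex by vertex to move all of $\mathcal{V}_2$ into $\mathcal{V}_1$ produces $\Gamma'$. Your global $S$-conjugation is precisely the composition of all these single-vertex sign flips done simultaneously, so the two arguments are the same in essence; yours has the advantage of being self-contained rather than citing an external lemma.
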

\begin{proof}Since $\Gamma$ is bipartite, up to switching (without loss of generality) the orientations of some hyperedges we can assume that all the inputs are in $\mathcal{V}_1$ and all the outputs are in $\mathcal{V}_2$, with $\mathcal{V}=\mathcal{V}_1\sqcup \mathcal{V}_2$. Furthermore, by Lemma 49 in \cite{Hypergraphs}, we can move a vertex from $\mathcal{V}_1$ to $\mathcal{V}_2$ or vice versa, by letting it be always an output or always an input, without affecting the spectrum. In particular, if we move all vertices to $\mathcal{V}_1$, we obtain the underlying hypergraph of $\Gamma$.
\end{proof}

\begin{remark}
As a consequence of Proposition \ref{prop_bipartite}, without loss of generality we can always assume that a bipartite hypergraph $\Gamma$ has only inputs, when studying the spectrum of the normalized Laplacian. In this case,
\begin{itemize}
    \item $A_{ij}=-\# \{h\in \mathcal{H}:v_i,v_j\in \mathcal{H}\}$ for each $i\neq j$;
    \item $\sum_jA_{ij}=-\sum_{h\ni v_i}(\# h-1)$, for each $v_i\in \mathcal{V}$.
\end{itemize}
\end{remark}

From here on we work on a hypergraph $\Gamma=(\mathcal{V},\mathcal{H})$ that has only inputs. Therefore, we focus on the signless normalized Laplacian of classical hypergraphs.

\section{Families of hypergraphs}\label{section:families}

\subsection{Hyperflowers}\label{section:flowers}
We now introduce and study \textsl{hyperflowers}: hypergraphs in which there is a set of nodes, the \textsl{core,} that is well connected to the other vertices, and a set of \textsl{peripheral nodes} such that each of them is contained in exactly one hyperedge. Hyperflowers are therefore a generalization of star graphs \cite{ANDREOTTI2018206}. 

\begin{definition}\label{def:hyperflowers}
A \textbf{$(l,r)$-hyperflower with t twins} (Figure \ref{fig:hyperflowertwins}) is an hypergraph $\Gamma=(\mathcal{V},\mathcal{H})$ whose vertex set can be written as $ \mathcal{V}=U\sqcup \mathcal{W}$, where:
\begin{itemize}
    \item $U$ is a set of $t\cdot l$ nodes $v_{11},\ldots,v_{1l},\ldots,v_{t1},\ldots,v_{tl}$ which are called \textbf{peripheral};
 \item There exist $r$ disjoint sets of vertices $h_1,\ldots,h_r \in \mathcal{P}(\mathcal{W})\setminus\{\emptyset\}$ such that
 $$\mathcal{H}=\{h|h=h_i\cup\bigcup_{z=1}^{t} v_{zj} \mbox{ for }i=1,\ldots,r \mbox{ and } j=1,\ldots,l \}.$$
\end{itemize}If $t=1$, we simply say that $\Gamma$ is a \textbf{$(l,r)$-hyperflower}.\newline
If $r=1$, we simply say that $\Gamma$ is a \textbf{$l$-hyperflower with t twins}.
\end{definition}
  	\begin{figure}[t!]
			\begin{center}
        \includegraphics[width=6cm]{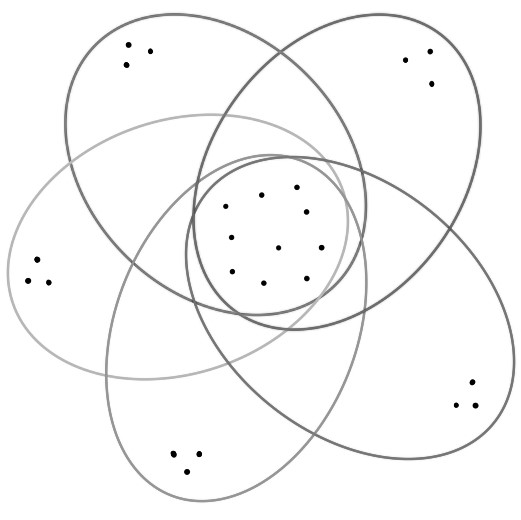}
        \caption{A $5$-hyperflower with $3$ twins.}        \label{fig:hyperflowertwins}
        \end{center}
    \end{figure}
\begin{remark}
The $(l,r)$-hyperflowers in Definition \ref{def:hyperflowers} are a particular case of the hyperstars in \cite{andreotti2020eigenvalues}, that also include weights and non-disjoint sets $h_1,\ldots,h_r$. Here we choose to study the particular structure of $(l,r)$-hyperflowers (and their generalizations with twins) because the strong symmetries of these structures allows for a deeper study of the spectrum. 
\end{remark}

\begin{proposition}\label{prop:l2hyperflower}
The spectrum of the $(l,2)$-hyperflower on $N$ nodes is given by:
\begin{itemize}
    \item $0$, with multiplicity $N-l-1$;
    \item $1$, with multiplicity $\geq l-1$;
    \item $\lambda_N>1$;
    \item $\lambda_{N-1}=N-\lambda_N-l+1\geq 1$.
\end{itemize}In the particular case in which $\#h$ is constant for each $h\in \mathcal{H}$, $\lambda_N=\frac{N-l}{2}+1$ and $\lambda_{N-1}=\frac{N-l}{2}$.
\end{proposition}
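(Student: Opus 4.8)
The plan is to compute the signless normalized Laplacian directly from the combinatorial structure of the $(l,2)$-hyperflower, exploiting its strong symmetry. First I would set up coordinates: the core $\mathcal{W}$ splits as $h_1\sqcup h_2$ (the two disjoint core blocks, since $r=2$), and the peripheral set $U$ consists of $l$ vertices $v_1,\dots,v_l$ (taking $t=1$, as the statement concerns the $(l,2)$-hyperflower). The hyperedges are $h_i\cup\{v_j\}$ for $i\in\{1,2\}$, $j\in\{1,\dots,l\}$, so there are $2l$ hyperedges, each peripheral vertex has degree $2$, and each core vertex has degree $l$. With $N=\#\mathcal{W}+l$ vertices total, I would record $\deg$, the adjacency entries $A_{ij}=-\#\{\text{hyperedges containing both }v_i,v_j\}$, and the row sums $\sum_j A_{ij}$.

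Next I would peel off the ``easy'' eigenvalues. The peripheral vertices sharing the same core block are twins among themselves? No --- more carefully, all $l$ peripheral vertices together with the core of each block behave symmetrically, so I would instead identify twin-like or duplicate-like structure: the core vertices inside $h_1$ are twins to each other, and likewise inside $h_2$; this forces many eigenvalue-$0$ coordinates via Proposition~\ref{prop:twins}, and the $l$ peripheral vertices form a duplicate family producing the eigenvalue $1$ with multiplicity $\geq l-1$ (this is where I would invoke Proposition~\ref{prop:lt} with $t=1$, or argue directly with test functions supported on differences $\mathbbm{1}_{v_j}-\mathbbm{1}_{v_{j'}}$). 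Combining: eigenvalue $0$ with multiplicity $N-l-1$ comes from the core collapsing to two effective vertices plus the peripheral structure, and eigenvalue $1$ with multiplicity $\geq l-1$ from the peripheral duplicates. This accounts for $N-2$ eigenvalues.

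The two remaining eigenvalues $\lambda_{N-1}\leq\lambda_N$ I would pin down via the trace identity and a symmetry reduction. The trace of $L$ is $N$ (stated in the excerpt), and since the $N-2$ known eigenvalues sum to $l-1$, we get $\lambda_{N-1}+\lambda_N = N-l+1$, which is exactly the claimed relation $\lambda_{N-1}=N-\lambda_N-l+1$. To show $\lambda_N>1$ (hence $\lambda_{N-1}\geq 1$ would need a separate sign/positivity argument, or follows once we know $\lambda_N$ explicitly and $N-l+1-\lambda_N\geq 1$), I would restrict $L$ to the subspace spanned by the symmetric indicator vectors $\mathbbm{1}_{h_1}$, $\mathbbm{1}_{h_2}$, $\mathbbm{1}_U$ (or rather the $L$-invariant complement of the $N-2$ eigenvectors above); $L$ acts there as a small $2\times 2$ or $3\times 3$ matrix whose nonzero eigenvalues are $\lambda_{N-1},\lambda_N$. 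In the uniform case $\#h\equiv c$, so $\#\mathcal{W}=2(c-1)$ and $N=2(c-1)+l$, this reduced matrix becomes explicit and its eigenvalues come out to $\tfrac{N-l}{2}+1$ and $\tfrac{N-l}{2}$.

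The main obstacle I anticipate is the bookkeeping in the symmetry reduction when the two core blocks $h_1,h_2$ have different sizes: the invariant subspace is spanned by four symmetric vectors ($\mathbbm{1}_{h_1},\mathbbm{1}_{h_2}$, $\mathbbm{1}_U$, and one more from within $U$ versus its orthogonal complement), and one must check that two of the four reduced eigenvalues are again $0$ and $1$, leaving a genuine $2\times 2$ block whose characteristic polynomial gives $\lambda^2-(N-l+1)\lambda+(\text{product})=0$ with product $>0$, forcing $\lambda_N>1$ and $\lambda_{N-1}\geq 1$. Verifying these reductions, and in particular that the smaller root is $\geq 1$ rather than merely $>0$, is the delicate computational heart of the argument; the uniform case is then a clean specialization.
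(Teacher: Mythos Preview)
Your overall skeleton matches the paper's: peel off eigenvalue $1$ (multiplicity $\geq l-1$) from the peripheral duplicates, peel off eigenvalue $0$ from the core twins, and recover the remaining eigenvalues from the trace identity $\sum_i\lambda_i=N$. Two points deserve correction or comment.

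First, your multiplicity count for $0$ is short by one. The twin families inside $h_1$ and $h_2$ contribute only $(\#h_1-1)+(\#h_2-1)=N-l-2$ independent eigenfunctions for $0$, not $N-l-1$. The paper supplies the missing one explicitly: take $g:=1$ on all peripheral vertices, $g:=-1$ on one chosen vertex of $h_1$ and one of $h_2$, and $g:=0$ elsewhere; a direct check shows $Lg=0$. In your language, this extra $0$ is exactly what would appear in the $3\times3$ reduced block on $\mathrm{span}(\mathbbm 1_{h_1},\mathbbm 1_{h_2},\mathbbm 1_{U})$---so the symmetric complement is three-dimensional, not four, and carries eigenvalues $0,\lambda_{N-1},\lambda_N$. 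With this correction you do have $N-2$ eigenvalues in hand and the trace forces $\lambda_{N-1}+\lambda_N=N-l+1$.

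Second, and more substantially, the paper does \emph{not} carry out the explicit symmetry reduction you propose for the inequalities $\lambda_N>1$ and $\lambda_{N-1}\geq 1$. It instead invokes the general extremal bounds on $\lambda_N$ (Theorem~3.1 of \cite{Sharp}):
\[
\lambda_N\ \geq\ \frac{1}{|\mathcal H|}\sum_{h\in\mathcal H}\#h\ >\ 1
\qquad\text{and}\qquad
\lambda_N\ \leq\ \max_{h\in\mathcal H}\#h\ \leq\ N-l,
\]
the latter with equality precisely when $\#h$ is constant. Plugging the upper bound into the trace relation gives $\lambda_{N-1}=N-l+1-\lambda_N\geq 1$ at once, and in the uniform-cardinality case $\lambda_N=\#h=\tfrac{N-l}{2}+1$ drops out immediately. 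This sidesteps entirely the ``delicate computational heart'' you anticipate. Your reduced-matrix route is self-contained and would succeed if executed carefully, but it is considerably more labor than the paper's two-line appeal to known bounds.
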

\begin{proof}
By \cite[Corollary 3.5]{MulasZhang}, $1$ is an eigenvalue with multiplicity at least $l-1$. Now, the $N-l$ vertices $v_{l+1},\ldots,v_N$ form two classes of twin vertices that generate the eigenvalue $0$ with multiplicity at least $N-l-2$. In particular, there exist $N-l-2$ linearly independent corresponding eigenfunctions $f_i:\mathcal{V}\rightarrow\mathbb{R}$ such that $f_i(v)=1$ for some $v\notin\{v_1,\ldots,v_l\}$, $f_i(w)=-1$ for a given $w$ twin of $v$, and $f_i=0$ otherwise. If we let $g(v_j):=1$ for each $j=1,\ldots,l$, $g(v'_1):=-1$ for exactly one $v'_1\in h_1$ and $g(v'_2):=-1$ for exactly one $v'_2\in h_2$, it's easy to see that $g$ is also an eigenfunction of $0$. Furthermore, the $f_i$'s and $g$ are all linearly independent, which implies that $0$ has multiplicity at least $N-l-1$.\newline

Now, by \cite[Theorem 3.1]{Sharp}, $\lambda_N\geq \frac{\sum_{h\in \mathcal{H}}\# h}{|\mathcal{H}|}>1$. We have therefore listed already $N-1$ eigenvalues and there is only one eigenvalue $\lambda$ missing. Since $\sum_{i=1}^N\lambda_i=N$, we have that $\lambda=N-\lambda_N-l+1$. In particular, since by \cite[Theorem 3.1]{Sharp} $\lambda_N\leq\max_{h\in \mathcal{H}}\#h$ with equality if and only if $\#h$ is constant, and $\max_{h\in \mathcal{H}}\#h\leq N-l$, we have that
\begin{equation*}
    \lambda=N-\lambda_N-l+1\geq 1,
\end{equation*}with equality if and only if $\#h$ is constant and equal to $N-l$, that is, if and only if $\#h_1=\#h_2=1$. Hence, $\lambda=\lambda_{N-1}$ and we have that $\lambda_{N-1}=1$ if and only if $\#h_1=\#h_2=1$.\newline

In general, if $\#h$ is constant for each $h\in \mathcal{H}$, then by \cite[Theorem 3.1]{Sharp} $\lambda_N=\#h=\frac{N-l}{2}+1$ and therefore $\lambda_{N-1}=\frac{N-l}{2}$.
\end{proof}

\begin{proposition}\label{prop:r1}
Let $\Gamma$ be an $(l,r)$--hyperflower with peripheral vertices $v_1,\ldots,v_l$. Let $\hat{\Gamma}:=(\hat{\mathcal{V}},\hat{\mathcal{H}})$ be the $(1,r)$--hyperflower defined by
\begin{equation*}
    \hat{\mathcal{V}}:=\mathcal{V}\setminus\{v_2,\ldots,v_l\} \qquad\text{and}\qquad \hat{\mathcal{H}}:=\{h\in \mathcal{H}: v_2,\ldots,v_l\notin h\}.
\end{equation*}Then, the spectrum of $\Gamma$ is given by:
\begin{itemize}
    \item The $N-l+1$ eigenvalues of $\hat{\Gamma}$, with multiplicity;
    \item $1$, with multiplicity at least $l-1$.
\end{itemize}
\end{proposition}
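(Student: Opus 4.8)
The plan is to compare the normalized Laplacians of $\Gamma$ and $\hat\Gamma$ directly through their eigenfunctions. First I would observe that the $l$ peripheral vertices $v_1,\dots,v_l$ of an $(l,r)$-hyperflower are pairwise twins: by Definition \ref{def:hyperflowers} with $t=1$, each hyperedge $h$ of $\Gamma$ contains exactly one of the $v_j$'s, and for each $i\in\{1,\dots,r\}$ and each $j$ the set $h_i\cup\{v_j\}$ is a hyperedge; hence $v_1,\dots,v_l$ belong to exactly the same collection of "shapes'' $h_i$, each with the same (input-only) orientation. By Proposition \ref{prop:twins}, this already accounts for the eigenvalue $1$ with multiplicity at least $l-1$, realized by the functions supported on $\{v_1,\dots,v_l\}$ that sum to zero there and vanish elsewhere (here one must note that in a classical hypergraph with only inputs, twin vertices produce eigenvalue $1$ rather than $0$ — this follows from Remark \ref{remark:graphs2} / the signless setup, or one can check directly that $L g = g$ for such $g$, since $A_{v_jv_k}=-\deg v_j$ on the twin class forces $L g(v_j)= g(v_j) - \frac1{\deg v_j}\sum_{k\ne j}(-\deg v_j)g(v_k) = g(v_j)$ when $\sum_k g(v_k)=0$).

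Next I would show that every eigenfunction of $\hat\Gamma$ lifts to an eigenfunction of $\Gamma$ with the same eigenvalue. The natural candidate: given an eigenfunction $\hat f$ on $\hat{\mathcal V}$ with $L_{\hat\Gamma}\hat f=\lambda\hat f$, define $f$ on $\mathcal V$ by $f(v_j):=\hat f(v_1)$ for all $j=1,\dots,l$ and $f(w):=\hat f(w)$ for $w\in\mathcal W$. The key point is that $\Gamma$ is obtained from $\hat\Gamma$ by "cloning'' the single peripheral vertex $v_1$ into $l$ twins, and cloning multiplies every hyperedge $h_i\cup\{v_1\}$ into the $l$ hyperedges $h_i\cup\{v_j\}$. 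One checks that for $w\in\mathcal W$ the degree is unchanged ($w$ lies in $h_i$ iff it lies in some $h_i\cup\{v_j\}$, and it lies in exactly one such per $i$, so $\deg_\Gamma w=\deg_{\hat\Gamma}w$) and the adjacency contributions $A_{wv_j}$ summed over $j$ reproduce exactly $A_{wv_1}$ in $\hat\Gamma$, so $Lf(w)=L_{\hat\Gamma}\hat f(w)=\lambda\hat f(w)=\lambda f(w)$; for the peripheral vertices, $\deg_\Gamma v_j=\deg_{\hat\Gamma}v_1=r$ and the defining relation $L f(v_j)=\lambda f(v_j)$ reduces to the corresponding relation at $v_1$ in $\hat\Gamma$ because $f$ is constant on $\{v_1,\dots,v_l\}$. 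This gives an injective linear map from eigenfunctions of $\hat\Gamma$ into eigenfunctions of $\Gamma$, preserving eigenvalues.

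Finally I would assemble the spectrum by a dimension count. The lift produces $N-l+1$ eigenvalues (the spectrum of $\hat\Gamma$, which has $N-l+1$ vertices), and the twin functions produce eigenvalue $1$ with multiplicity $\ge l-1$. To conclude these together exhaust the spectrum of $\Gamma$, I would verify that the lifted eigenfunctions together with the twin functions span a space of dimension $N$: the lifted functions are constant on the twin class $\{v_1,\dots,v_l\}$, the twin functions have mean zero on that class and vanish off it, so the two families are linearly independent, and $(N-l+1)+(l-1)=N$. Hence the listed eigenvalues, with the stated multiplicities, are exactly the spectrum of $\Gamma$.

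The main obstacle I expect is the bookkeeping in the lifting step — specifically, checking that $L f(w)=L_{\hat\Gamma}\hat f(w)$ for the core/support vertices $w\in\mathcal W$, which requires carefully tracking how the adjacency entries $A_{wv}$ and $A_{ww'}$ behave under cloning (each hyperedge $h_i\cup\{v_1\}$ of $\hat\Gamma$ becomes $l$ hyperedges of $\Gamma$, so naively $A_{ww'}$ in $\Gamma$ is $l$ times that in $\hat\Gamma$, and one must confirm that the degree $\deg_\Gamma w$ is likewise $l$ times $\deg_{\hat\Gamma}w$ — wait, it is not, since $w$ sits in exactly one of $h_i\cup\{v_j\}$ for each $i$; so in fact neither the degree nor the off-core adjacency scales, and it is only the $w$–$v_j$ adjacencies that need to be summed). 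Getting this scaling exactly right, and in particular confirming that $\deg_\Gamma w = \deg_{\hat\Gamma} w$ and $\sum_{j=1}^l A_{wv_j} = A_{wv_1}^{\hat\Gamma}$ and $A_{ww'}^\Gamma = A_{ww'}^{\hat\Gamma}$, is where the argument must be handled with care; everything else is linear algebra.
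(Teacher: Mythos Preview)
Your overall strategy matches the paper's: establish the eigenvalue $1$ with multiplicity $l-1$ from the symmetry of the peripheral vertices, lift each eigenfunction of $\hat\Gamma$ to $\Gamma$ by making it constant on $\{v_1,\dots,v_l\}$, and finish by a dimension count. However, two concrete errors would derail your execution as written.

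First, the peripheral vertices $v_1,\dots,v_l$ are \emph{not} twins in the sense of Definition~\ref{def:twins}: each hyperedge $h_i\cup\{v_j\}$ contains exactly one peripheral vertex, so $v_1$ and $v_2$ share no hyperedge at all, and $A_{v_1v_2}=0$, not $-\deg v_1$. They are \emph{duplicate} vertices, and your direct check of $Lg=g$ goes through precisely because $A_{v_jv_k}=0$ for $j\neq k$ (then $Lg(v_j)=g(v_j)$ trivially, and $Lg(w)=0$ for $w\in\mathcal W$ uses $\sum_j g(v_j)=0$). The paper invokes exactly this duplicate-vertex mechanism via \cite[Corollary~3.5]{MulasZhang}.

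Second, and more seriously, your scaling claims for the core vertices are inverted. If $w\in h_{i_0}$, then $w$ belongs to \emph{all} $l$ hyperedges $h_{i_0}\cup\{v_1\},\dots,h_{i_0}\cup\{v_l\}$ of $\Gamma$, not just one of them. Hence $\deg_\Gamma w=l$ while $\deg_{\hat\Gamma}w=1$; likewise for $w,w'\in h_{i_0}$ one has $A^{\Gamma}_{ww'}=-l$ while $\hat A_{ww'}=-1$; and $\sum_{j=1}^l A^{\Gamma}_{wv_j}=-l$ while $\hat A_{wv_1}=-1$. So none of your three asserted equalities $\deg_\Gamma w=\deg_{\hat\Gamma}w$, $A^{\Gamma}_{ww'}=\hat A_{ww'}$, $\sum_j A^{\Gamma}_{wv_j}=\hat A_{wv_1}$ hold. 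The correct statement (and what the paper verifies) is that \emph{all three} quantities scale by the common factor $l$, which then cancels in $Lf(w)=f(w)-\frac{1}{\deg_\Gamma w}\bigl(\cdots\bigr)$ to give $Lf(w)=\hat L\hat f(w)$. With that correction your lift works and the remainder of your argument (including the dimension count, which the paper leaves implicit) is fine.
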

\begin{proof}
By \cite[Corollary 3.5]{MulasZhang}, adding $v_2,\ldots,v_l$ to $\hat{\Gamma}$ produces the eigenvalue $1$ with multiplicity $l-1$. Therefore, it is left to show that, if $\lambda$ is an eigenvalue of $\hat{\Gamma}$, then $\lambda$ is also an eigenvalue of $\Gamma$. Let $L$ and $A$ be the Laplacian and the adjacency matrix on $\Gamma$, respectively, and let $\hat{L}$ and $\hat{A}$ be the Laplacian and the adjacency matrix on $\hat{\Gamma}$, respectively. Let also $\hat{f}$ be an eigenfunction for $\hat{\Gamma}$ corresponding to the eigenvalue $\lambda$. Then,
\begin{equation*}
    \hat{L}\hat{f}(v_k)=\hat{f}(v_k)-\frac{1}{\deg_{\hat{\Gamma}}v_k}\sum_{v_i\in \hat{\mathcal{V}}\setminus \{v_k\}}\hat{A}_{ik}\hat{f}(v_i)=\lambda\cdot \hat{f}(v_k), \qquad\text{for all }v_k\in \hat{\mathcal{V}}.
\end{equation*}Now, let $f:\mathcal{V}\rightarrow \mathbb{R}$ be such that $f:=\hat{f}$ on $\hat{\mathcal{V}}$ and $f(v_2):=\ldots:=f(v_l):=\hat{f}(v_1)$.
Then,
\begin{align*}
     Lf(v_1)&=f(v_1)-\frac{1}{\deg v_1}\sum_{v_i\in \hat{\mathcal{V}}\setminus \{v_1\}}A_{i1}f(v_i)=\hat{f}(v_1)-\frac{1}{\deg_{\hat{\Gamma}} v_1}\sum_{v_i\in \hat{\mathcal{V}}\setminus \{v_1\}}\hat{A}_{i1}\hat{f}(v_i)\\&=\hat{L}\hat{f}(v_k) =\lambda\cdot \hat{f}(v_1) =\lambda\cdot f(v_1).
\end{align*}
  Similarly, for $j\in 2,\ldots,l$,
    \begin{equation*}
        Lf(v_j)=f(v_j)-\frac{1}{\deg v_j}\sum_{v_i\in \hat{\mathcal{V}}\setminus \{v_1\}}A_{ij}f(v_i)=\hat{f}(v_1)-\frac{1}{\deg_{\hat{\Gamma}} v_1}\sum_{v_i\in \hat{\mathcal{V}}\setminus \{v_1\}}\hat{A}_{i1}\hat{f}(v_i)=\lambda\cdot \hat{f}(v_1)=\lambda\cdot f(v_j).
    \end{equation*}Furthermore, for each $v_k\in \mathcal{V}\setminus\{v_1,\ldots,v_l\}$, we have that
    \begin{itemize}
        \item $\deg_{\hat{\Gamma}}(v_k)=1$ while $\deg(v_k)=l$;
        \item For each $v_{k'}\in \mathcal{V}\setminus\{v_1,\ldots,v_l,v_k\}$ such that $\hat{A}_{kk'}\neq 0$, $\hat{A}_{kk'}=-1$ while $A_{kk'}=-l$;
        \item $\hat{A}_{k1}=A_{k1}=-1$, and $A_{kj}=-1$ for each $j\in 2,\ldots, l$. 
    \end{itemize}Therefore, for for each $v_k\in \mathcal{V}\setminus\{v_1,\ldots,v_l\}$,
    \begin{align*}
        Lf(v_k)&=f(v_k)-\frac{1}{\deg v_k}\left(\sum_{k'}A_{kk'}f(v_{k'})+\sum_{j=1}^lA_{kj}f(v_j)\right)
        \\&=\hat{f}(v_k)-\frac{1}{l}\left(\sum_{k'}(-l)\hat{f}(v_{k'})+(-1)\sum_{j=1}^l\hat{f}(v_1)\right)
        \\&= \hat{f}(v_k)+\sum_{k'}\hat{f}(v_{k'})+\hat{f}(v_1)
        \\&=\hat{L}\hat{f}(v_k)=\lambda\cdot \hat{f}(v_k)=\lambda\cdot f(v_k).
    \end{align*}This proves that $\lambda$ is an eigenvalue for $L$, and $f$ is a corresponding eigenfunction.
\end{proof}
\begin{remark}
Proposition \ref{prop:r1} tells us that, in order to know the spectrum of a $(l,r)$--hyperflower, we can study the spectrum of the $(1,r)$--hyperflower obtained by deleting $l-1$ peripheral vertices and the hyperedges containing them, and then add $l-1$ $1$'s to the spectrum.
\end{remark}

\begin{proposition}The spectrum of the $l$-hyperflower with $t$ twins is given by:
\begin{itemize}
    \item $0$, with multiplicity $N-l$;
    \item $t$, with multiplicity $l-1$;
   \item $\lambda_N=N-tl+t$.\end{itemize}\end{proposition}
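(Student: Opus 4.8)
The plan is to exploit the two general structural results already proved, namely Proposition~\ref{prop:lt} applied with parameters $(l,t)$, and the ``deletion'' idea behind Proposition~\ref{prop:r1}, and then fix the remaining single eigenvalue by the trace identity $\sum_i \lambda_i = N$. An $l$-hyperflower with $t$ twins has $r=1$, so there is exactly one ``core'' set $h_1\subset\mathcal{W}$, and the hyperedge set is $\{h_1\cup\bigcup_{z=1}^t v_{zj}: j=1,\dots,l\}$; thus there are $M=l$ hyperedges. The $tl$ peripheral vertices split into $l$ groups $\mathcal{V}_j=\{v_{1j},\dots,v_{tj}\}$ of mutual twins (they lie in exactly the one hyperedge indexed by $j$, with the same orientation), and for $i\neq j$ any $v\in\mathcal{V}_i$, $w\in\mathcal{V}_j$ are non-adjacent while sharing the same adjacencies ($A_{\cdot k}=-1$ for $k\in h_1$) to every core vertex. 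Hence the peripheral vertices form exactly an $l$-duplicate family of $t$-twins, and Proposition~\ref{prop:lt} immediately gives $t$ as an eigenvalue of multiplicity at least $l-1$ and $0$ as an eigenvalue of multiplicity at least $l(t-1)$.

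Next I would account for the core vertices $\mathcal{W}=h_1$: all of them are themselves twins to each other (each core vertex lies in all $l$ hyperedges, always as an input), so by Proposition~\ref{prop:twins} they contribute $0$ as an eigenvalue of multiplicity at least $|h_1|-1$. Writing $N = tl + |h_1|$, the eigenvalue $0$ then has multiplicity at least $l(t-1) + (|h_1|-1) = tl - l + |h_1| - 1 = N - l - 1$; together with the $(l-1)$-fold eigenvalue $t$ this already accounts for $N-2$ eigenvalues. To promote the count to $N-1$ I would produce one more eigenfunction for $0$ by hand, analogous to the function $g$ in the proof of Proposition~\ref{prop:l2hyperflower}: set $g\equiv 1$ on all $l$ ``twin-group representatives'' (one vertex per $\mathcal{V}_j$, but since within a group we already used up the freedom it is cleaner to take $g\equiv 1$ on all peripheral vertices) and $g\equiv c$ on $h_1$ with $c$ chosen so that $Lg=0$; checking $Lg(v)=0$ at a peripheral vertex and at a core vertex fixes $c$ and shows linear independence from the previously listed eigenfunctions, upgrading the multiplicity of $0$ to $N-l$.

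Having pinned down $N-1$ eigenvalues ($0$ with multiplicity $N-l$ and $t$ with multiplicity $l-1$), the last eigenvalue $\lambda_N$ is forced by $\sum_{i=1}^N\lambda_i = N$: $\lambda_N = N - t(l-1) = N - tl + t$. I would also remark that $\lambda_N>0$ (indeed it is the largest eigenvalue, since a connected hypergraph with more than one vertex has $\lambda_N>1$ by \cite[Theorem 3.1]{Sharp}, consistent with $N-tl+t = |h_1| + t > 1$), so this is genuinely $\lambda_N$ and not one of the zeros.

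The main obstacle I anticipate is the bookkeeping in the step that boosts the multiplicity of $0$ from $N-l-1$ to $N-l$: one has to verify carefully that the extra function $g$ is an eigenfunction for $0$ (a short computation using $\deg v = l$ for core vertices, $\deg v = 1$ for peripheral vertices, and $\sum_j A_{kj} = -|h_1|+1$ for a core vertex $v_k$ over the remaining core vertices plus $-l$ times the peripheral count, or equivalently invoking \cite[Equation (5)]{Hypergraphs}) and, more delicately, that it is linearly independent from the $l(t-1)$ functions supported inside single twin-groups and the $|h_1|-1$ functions supported inside $h_1$ — which is clear because $g$ is the unique one (up to scaling) that is constant and nonzero on every twin-group and on $h_1$ simultaneously. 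Everything else is a direct application of the already-established Propositions \ref{prop:lt} and \ref{prop:twins} plus the trace normalization.
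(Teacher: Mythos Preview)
Your argument is correct, but it takes a longer path than the paper's. The paper observes directly that every hyperedge has cardinality $|h_1|+t=N-tl+t$, so \cite[Theorem 3.1]{Sharp} gives $\lambda_N=N-tl+t$ immediately; then Proposition~\ref{prop:lt} gives $t$ with multiplicity at least $l-1$, and since $(N-tl+t)+t(l-1)=N$ the trace identity forces the remaining $N-l$ eigenvalues to be $0$. In other words, the paper pins down $\lambda_N$ first and lets the trace do all the work for the $0$'s, whereas you build the $0$-eigenspace by hand (via the core twins, the peripheral twin families, and the extra function $g$) and only then use the trace to recover $\lambda_N$. Your route is sound---the function $g$ with $g\equiv 1$ on the periphery and $g\equiv -t/|h_1|$ on $h_1$ does satisfy $Lg=0$, and linear independence from the earlier eigenfunctions is clear since those all sum to zero on both the core and the periphery---but it is considerably more labor for the same conclusion. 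One small point: to conclude that the leftover eigenvalue is $\lambda_N$ you should compare it to $t$, not just to $1$ or $0$; your own computation $N-tl+t=|h_1|+t>t$ is exactly what is needed here.
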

\begin{proof}Since all hyperedges have cardinality $N-tl+t$, by \cite[Theorem 3.1]{Sharp} we have that $\lambda_N=N-tl+t$. Furthermore, by Proposition \ref{prop:lt}, $t$ is an eigenvalue with multiplicity at least $l-1$. Since, clearly, $N-tl+t>t$, we have listed $l$ eigenvalues whose sum is $N$. It follows that $0$ has multiplicity $N-l$.
\end{proof}

\subsection{Complete hypergraphs}    
\begin{definition}[\cite{MulasZhang}]
We say that $\Gamma=(\mathcal{V},\mathcal{H})$ is the \textbf{$c$-complete hypergraph}, for some $c\geq 2$, if $\mathcal{V}$ has cardinality $N$ and $\mathcal{H}$ is given by all possible ${N \choose c}$ hyperedges of cardinality $c$.
\end{definition}
\begin{proposition}\label{prop:complete}
The spectrum of the $c$-complete hypergraph is given by:
\begin{itemize}
    \item $\frac{N-c}{N-1}$, with multiplicity $N-1$;
    \item $c$, with multiplicity $1$.
\end{itemize}
\end{proposition}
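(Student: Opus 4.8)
The plan is to compute the spectrum of the $c$-complete hypergraph directly from its adjacency and degree matrices, exploiting the full symmetry under the symmetric group $S_N$ acting on $\mathcal{V}$. First I would record that every vertex has the same degree: a fixed vertex $v$ lies in exactly $\binom{N-1}{c-1}$ of the hyperedges, so $D = \binom{N-1}{c-1}\,\id$, a scalar matrix. Next I would compute the off-diagonal entries of the (signless) adjacency matrix $A$. Since we work with the underlying hypergraph (all inputs), $A_{ij} = -\#\{h \in \mathcal{H} : v_i, v_j \in h\}$ for $i \neq j$, and two fixed distinct vertices lie together in exactly $\binom{N-2}{c-2}$ hyperedges. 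Hence $A = -\binom{N-2}{c-2}\,(J - \id)$, where $J$ is the all-ones $N\times N$ matrix. This makes $L = \id - D^{-1}A$ an explicit affine function of $J$.

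Once $A$ is written in terms of $J$, the eigenvalue computation is immediate: $J$ has eigenvalue $N$ on the constant vector $\mathbbm 1$ and eigenvalue $0$ on the orthogonal complement $\mathbbm 1^\perp$, which has dimension $N-1$. Substituting $L = \id + \frac{\binom{N-2}{c-2}}{\binom{N-1}{c-1}}(J-\id)$ and simplifying the ratio $\binom{N-2}{c-2}/\binom{N-1}{c-1} = \frac{c-1}{N-1}$, I get that on $\mathbbm 1^\perp$ the operator $L$ acts as $1 - \frac{c-1}{N-1} = \frac{N-c}{N-1}$, giving the eigenvalue $\frac{N-c}{N-1}$ with multiplicity $N-1$; and on $\mathbbm 1$ it acts as $1 + \frac{c-1}{N-1}(N-1) = 1 + (c-1) = c$, giving the eigenvalue $c$ with multiplicity $1$. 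As a consistency check I would verify $\frac{N-c}{N-1}(N-1) + c = N$, matching the known trace identity $\sum_i \lambda_i = N$, and note $\frac{N-c}{N-1} < 1 < c$ for $2 \le c < N$, so the ordering is as expected.

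I do not anticipate a genuine obstacle here; the only thing to be careful about is the bookkeeping with binomial coefficients (in particular handling the edge behaviour when $c = N$, where $\mathcal{H}$ is a single hyperedge, $J - \id$ still makes sense, and the formula $\frac{N-c}{N-1} = 0$ correctly predicts $0$ with multiplicity $N-1$ and $c = N$ with multiplicity $1$). Alternatively, one could derive the same result by invoking the duplicate-vertex machinery — all $N$ vertices are pairwise duplicate in the underlying hypergraph, forcing the eigenvalue $\frac{N-c}{N-1}$ (the common value is pinned down by the trace) with multiplicity $N-1$ — and then recovering the last eigenvalue from $\sum_i\lambda_i = N$; but the direct $J$-computation is cleaner and self-contained, so that is the route I would present.
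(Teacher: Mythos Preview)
Your main argument is correct and shares its opening with the paper's proof: both compute $d=\binom{N-1}{c-1}$, $A_{ij}=-\binom{N-2}{c-2}$ for $i\neq j$, and the ratio $\frac{c-1}{N-1}$. From there the paper (i) invokes an external result to get $\lambda_N=c$, and (ii) exhibits the $N-1$ explicit eigenfunctions $f_i$ given by $f_i(v_1)=1$, $f_i(v_i)=-1$, $f_i=0$ elsewhere, for the eigenvalue $\frac{N-c}{N-1}$. Your packaging via $L=\id+\frac{c-1}{N-1}(J-\id)$ and the spectrum of $J$ recovers both eigenvalues at once and is slightly more self-contained, since the constant vector $\mathbbm{1}$ yields $\lambda_N=c$ directly without an outside citation; the paper's $f_i$'s are just a spanning set for your $\mathbbm{1}^\perp$, so the two arguments are the same computation in different clothing.

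One small correction to your aside: the $N$ vertices of the $c$-complete hypergraph are \emph{not} pairwise duplicate in the paper's sense, since duplicates must satisfy $A_{ij}=0$, whereas here $A_{ij}=-\binom{N-2}{c-2}\neq 0$ for $c\ge 2$. So that alternative route does not go through as stated; stick with your $J$-argument.
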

\begin{proof}By \cite[Theorem 3.1]{Sharp}, $\lambda_N=c$. Now, observe that each vertex $v$ has degree $d:={N-1 \choose c-1}$, while $a:=A_{ij}=-{N-2 \choose c-2}$ is constant for all $i\neq j$. Therefore, $\frac{a}{d}=-\frac{c-1}{N-1}$ and
\begin{equation*}
    Lf(v)=f(v)-\frac{a}{d}\left(\sum_{w\neq v}f(w)\right)=f(v)+\frac{c-1}{N-1}\left(\sum_{w\neq v}f(w)\right), \qquad \forall v\in \mathcal{V}.
\end{equation*}Now, for each $i=2,\ldots,N$, let $f(v_1):=1$, $f(v_i):=-1$ and $f:=0$ otherwise. Then,
\begin{itemize}
    \item $Lf(v_1)=1-\frac{c-1}{N-1}=\frac{N-c}{N-1}\cdot f(v_1)$,
    \item $Lf(v_i)=-1+\frac{c-1}{N-1}=\frac{N-c}{N-1}\cdot f(v_i)$, and
    \item $Lf(v_j)=0=\frac{N-c}{N-1}\cdot f(v_j)$ for all $j\neq 1,i$.
\end{itemize}Therefore, the $f_i$'s are $N-1$ linearly independent eigenfunctions for $\frac{N-c}{N-1}$. This proves the claim.
\end{proof}
\begin{ex}
Proposition \ref{prop:complete} tells us that the signless spectrum of the complete graph on $N$ nodes is given by $\frac{N-2}{N-1}$, with multiplicity $N-1$, and $2$ with multiplicity $1$. By Remark \ref{remark:graphs2}, this is equivalent to saying that the spectrum of the complete graph is given by $\frac{N}{N-1}$, with multiplicity $N-1$, and $0$ with multiplicity $1$. This is a well known result (see \cite{Chung}) and Proposition \ref{prop:complete} generalizes it.
\end{ex}

\subsection{Lattice Hypergraphs}
\textsl{Lattice graphs}, also called \textsl{grid graphs}, are well known both in graph theory and in applications \cite{Asllani2015, Andreotti2015ModelingTF,Lattice1,Lattice2,Lattice3,Lattice4,Lattice5,Lattice6}. For instance, they model topologies used in transportation networks, such as the Manhattan street network, and crystal structures used in  crystallography. These structures and their spectra are also widely used in statistical mechanics, in the study of ASEP, TASEP and SSEP models \cite{Mallick_2011, Speer1994, Schtz2017FluctuationsIS}, which have applications in the Ising model, (lattice) gas and which also describe the movement of ribosomes along the mRNA \cite{Gritsenko2015UnbiasedQM}. In this section we generalize the notion of lattice graph to the case of hypergraphs.
\begin{definition}
Given $l\in \mathbb{N}_{\geq 2}$, we define the \textbf{$l$-lattice} as the hypergraph $\Gamma=(\mathcal{V},\mathcal{H})$ on $l^2$ nodes and $2l$ hyperedges that can be drawn so that:
\begin{itemize}
    \item The vertices form a $l\times l$ grid, and
    \item The hyperedges are exactly the rows and the columns of the grid (Figure \ref{fig:lattice}).
\end{itemize}
\end{definition}
  	\begin{figure}[t!]
			\begin{center}
        \includegraphics[width=4cm]{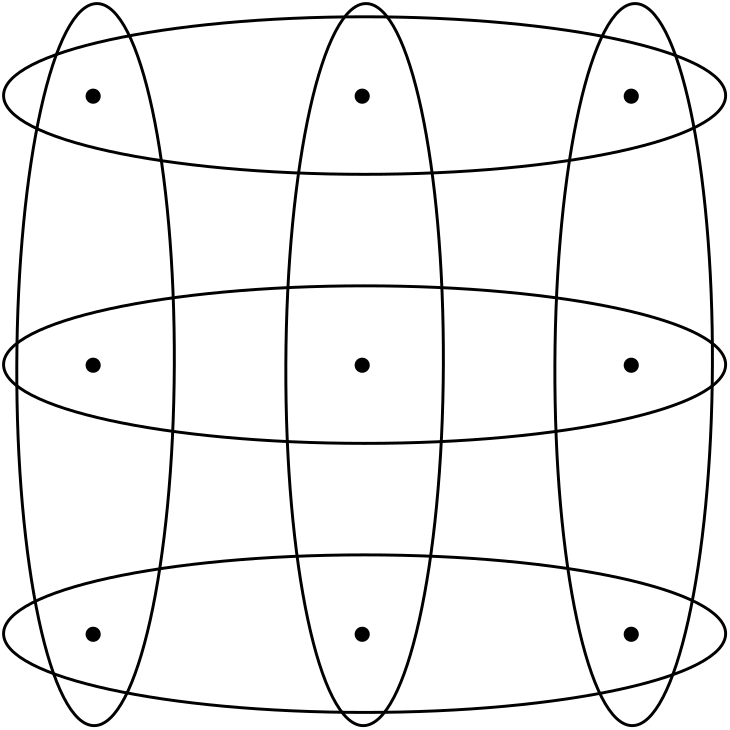}
        \caption{A $3$-lattice.}        \label{fig:lattice}
        \end{center}
    \end{figure}

\begin{proposition}
The spectrum of the $l$-lattice is given by:
\begin{itemize}
    \item $0$, with multiplicity $l^2-2l+1$;
    \item $\frac{l}{2}$, with multiplicity $2(l-1)$;
    \item $l$, with multiplicity $1$.
\end{itemize}
\end{proposition}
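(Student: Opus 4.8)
The plan is to compute the spectrum directly from the very rigid combinatorics of the $l$-lattice. First I would record the parameters: $\Gamma$ has $N=l^2$ vertices, each of degree $2$ (every vertex lies in exactly one row and one column), and every hyperedge has cardinality $l$. Hence $D=2\cdot\id$ and, labelling the vertices by grid points $(i,j)$ with $i,j\in\{1,\dots,l\}$, the (signless) adjacency matrix satisfies $A_{(i,j),(i',j')}=-1$ exactly when $(i,j)\neq(i',j')$ share a row or a column, and $0$ otherwise; equivalently $L=\id-\tfrac12 A=\tfrac12\big(I_l\otimes J_l+J_l\otimes I_l\big)$, where $J_l$ is the all-ones matrix. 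This Kronecker description already predicts the three eigenvalues $\tfrac12(l+l)=l$, $\tfrac12(0+l)=\tfrac l2$ and $\tfrac12(0+0)=0$ with the stated multiplicities $1$, $2(l-1)$, $(l-1)^2$, but to stay in line with the rest of the paper I would instead exhibit the eigenfunctions explicitly.

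Second, I would produce the three families of eigenfunctions. For the eigenvalue $l$: since $\#h$ is constant equal to $l$, \cite[Theorem 3.1]{Sharp} gives $\lambda_N=l$ (alternatively one checks directly that the constant function is an $l$-eigenfunction). For the eigenvalue $0$: by \cite[Equation (5)]{Hypergraphs}, a function $f$ lies in $\ker L$ if and only if $\sum_{v\in h}f(v)=0$ for every hyperedge $h$, i.e.\ if and only if all row-sums and all column-sums of the array $(f(i,j))$ vanish; for $a,b\in\{2,\dots,l\}$ the array $f_{ab}$ supported on $(1,1),(1,b),(a,1),(a,b)$ with values $+1,-1,-1,+1$ has this property, giving $(l-1)^2$ linearly independent $0$-eigenfunctions (their supports at $(a,b)$ are pairwise distinct). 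For the eigenvalue $l/2$: for $a=2,\dots,l$ I would take $g_a$ equal to $1$ on column $1$, to $-1$ on column $a$, and $0$ elsewhere, and symmetrically $h_a$ equal to $1$ on row $1$, $-1$ on row $a$, $0$ elsewhere. A short case analysis of $Lg_a(v)$ and $Lh_a(v)$ over the three vertex types (on column/row $1$, on column/row $a$, and elsewhere), using $\deg v=2$ and $A_{vw}=-1$ for neighbours, shows these are eigenfunctions for $l/2$; there are $2(l-1)$ of them, and they are linearly independent because the $g_a$ span the space of column-constant profiles summing to zero, the $h_a$ span the analogous row space, and these two spaces meet only in $\{0\}$.

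Finally I would conclude by a dimension count: the $(l-1)^2$ functions $f_{ab}$, the $2(l-1)$ functions $g_a,h_a$, and the constant function are eigenfunctions for the pairwise distinct eigenvalues $0$, $l/2$, $l$ (distinct since $l\ge 2$), hence are linearly independent altogether; since $(l-1)^2+2(l-1)+1=l^2=N$, they form a basis of $\mathbb{R}^{\mathcal V}$, so $L$ has no further eigenvalues and the multiplicities are exactly $l^2-2l+1$, $2(l-1)$ and $1$ (this is consistent with $\sum_i\lambda_i=N$, since $0\cdot(l-1)^2+\tfrac l2\cdot 2(l-1)+l=l^2$). I expect the only genuine work to be the verification that $g_a$ and $h_a$ are $l/2$-eigenfunctions — routine but slightly fiddly — together with keeping the linear-independence bookkeeping honest; there is no conceptual obstacle beyond spotting the right eigenfunctions, which the Kronecker factorization of $L$ makes transparent.
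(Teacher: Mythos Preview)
Your argument is correct and in fact tighter than the paper's. The paper proceeds differently: it invokes \cite[Theorem~3.1]{Sharp} to get $\lambda_N=l$, and \cite[Corollary~33]{Hypergraphs} (the multiplicity of $0$ equals $N$ minus the number of linearly independent hyperedges, here $2l-1$) to pin down the exact multiplicity $l^2-2l+1$ of the eigenvalue $0$; it then exhibits, for a fixed row $w_1,\dots,w_l$, the functions $f_i$ ($i=1,\dots,l-1$) that are $+1$ on the row-neighbours of $w_i$, $-1$ on its column-neighbours, and $0$ elsewhere, as eigenfunctions for $l/2$. Your route is more self-contained: the Kronecker identity $L=\tfrac12(I_l\otimes J_l+J_l\otimes I_l)$ yields the full spectrum in one stroke, and your explicit $0$- and $l/2$-eigenfunctions (the ``four-corner'' arrays $f_{ab}$ and the column/row-constant $g_a,h_a$) are different from the paper's cross-shaped $f_i$. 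What your approach buys is completeness: the paper's construction displays only $l-1$ eigenfunctions for $l/2$, while the stated multiplicity is $2(l-1)$, so strictly speaking a further $l-1$ eigenfunctions (or an additional argument) are still needed there; your dimension count closes this cleanly. What the paper's approach buys is that the exact multiplicity of $0$ comes for free from the rank of the incidence matrix, without writing down $(l-1)^2$ explicit kernel vectors.
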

\begin{proof}By \cite[Theorem 3.1]{Sharp}, $\lambda_{l^2}=l$. Furthermore, by \cite[Corollay 33]{Hypergraphs}, since the maximum number of \textsl{linearly independent hyperedges} is $2l-1$, this implies that $0$ is an eigenvalue with multiplicity $l^2-2l+1$.\newline

Now, observe that $\deg v=2$ for each $v$ and
\begin{equation*}
    A_{ij}=\begin{cases}-1 &\text{if } v_i\sim v_j\\ 0 &\text{otherwise,}\end{cases}
\end{equation*}for all $i\neq j$. Therefore,
\begin{equation}\label{eq:Llattice}
Lf(v)=f(v)+\frac{1}{2}\left(\sum_{w\sim v}f(w)\right), \qquad \text{for all }v\in \mathcal{V}.
\end{equation}Fix a row of the $l$-lattice given by the vertices $w_1,\ldots,w_l$. For $i=1,\ldots,l-1$, let $f_i:\mathcal{V}\rightarrow\mathbb{R}$ be $1$ on the neighbors of $w_i$ with respect to the row, $-1$ on the neighbors of $w_i$ with respect to its column, and $0$ otherwise. Then, by \eqref{eq:Llattice}, it is easy to check that $f_i$ is an eigenfunction for $\frac{l}{2}$. Since the $f_i$'s are linearly independent, this proves the claim.

\end{proof}

\subsection{Hypercycles}
\begin{definition}
Fix $N$ and $l\in\{2,\ldots,\frac{N}{2}\}$. We say that $\Gamma=(\mathcal{V},\mathcal{H})$ is the \textbf{$l$-hypercycle} on $N$ nodes (Figure \ref{fig:cycle}) if $\mathcal{V}=\{v_1,\ldots,v_N\}$, $\mathcal{H}=\{h_1,\ldots,h_N\}$ and
\begin{equation*}
    h_i=\{v_i,\ldots,v_{i+l-1}\},
\end{equation*}where we let $v_{N+i}:=v_{i}$ for each $i=1,\ldots, N$.
\end{definition}
\begin{theorem}
The eigenvalues of the $l$-hypercycle are
\begin{equation*}
    \lambda_i=1+\frac{\sum_{r=1}^Nm(r)\cdot \cos \left(\frac{2\pi ir}{N}\right)}{l},\qquad \text{for }i=1,\ldots,N,
\end{equation*}where $m:\{0,\ldots,N\}\rightarrow\mathbb{Z}$ is such that:
\begin{itemize}
    \item $m(r):=l-r$ for all $r\in \{1,\ldots,l-1\}$
    \item $m(N-k):=m(k)=l-k$ for all $k\in \{1,\ldots,l-1\}$
    \item $m:=0$ otherwise.
\end{itemize}
\end{theorem}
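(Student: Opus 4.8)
The plan is to exploit the cyclic symmetry of the $l$-hypercycle to diagonalize the normalized Laplacian simultaneously via the discrete Fourier transform. First I would compute the adjacency matrix explicitly. Since $\Gamma$ has only inputs, for $i\neq j$ we have $A_{ij}=-\#\{h\in\mathcal{H}:v_i,v_j\in h\}$, and the hyperedge $h_k=\{v_k,\dots,v_{k+l-1}\}$ contains both $v_i$ and $v_j$ precisely when $k$ ranges over those starting indices for which the window of length $l$ covers both. A short count shows that this number depends only on the circular distance $r$ between $i$ and $j$: it equals $l-r$ when $1\le r\le l-1$ and $0$ otherwise, and by symmetry of circular distance on $\mathbb{Z}/N$ this is exactly the function $m(r)$ with $m(N-k)=m(k)$. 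Also each vertex lies in exactly $l$ hyperedges, so $\deg v=l$ for all $v$ and $D=l\cdot\id$. Hence $L=\id-\tfrac{1}{l}A$, and $A$ is a circulant matrix with first row $(0,-m(1),\dots,-m(l-1),0,\dots,0,-m(l-1),\dots,-m(1))$; equivalently $A_{ij}=-m(|i-j|_N)$ where $|\cdot|_N$ denotes circular distance, with $m(0):=0$.

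Next I would invoke the standard spectral theory of circulant matrices: a circulant matrix with symbol $c(r)$ has eigenvalues $\sum_{r=0}^{N-1}c(r)\,\zeta^{ir}$ for $\zeta=e^{2\pi\mathrm{i}/N}$ and $i=0,\dots,N-1$, with eigenvectors the Fourier modes $(\zeta^{ij})_{j}$. Applying this to $A$ with $c(r)=-m(r)$ and using $m(r)=m(N-r)$ to pair the terms $\zeta^{ir}+\zeta^{i(N-r)}=2\cos(2\pi i r/N)$, the eigenvalues of $A$ are $-\sum_{r=1}^{N}m(r)\cos(2\pi i r/N)$ (the $r=N$ term contributes $m(N)=0$, so including it is harmless and matches the stated range). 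Therefore the eigenvalues of $L=\id-\tfrac1l A$ are
\begin{equation*}
\lambda_i=1+\frac{\sum_{r=1}^{N}m(r)\cos\!\left(\frac{2\pi i r}{N}\right)}{l},\qquad i=1,\dots,N,
\end{equation*}
which is the claimed formula.

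The only genuinely non-routine step is the combinatorial count giving $A_{ij}=-m(|i-j|_N)$; everything after that is the textbook circulant diagonalization. For that count, fix $i$ and $j$ at circular distance $r$ with $1\le r\le l-1$, say (after rotating) $i=1$ and $j=1+r$; the hyperedge $h_k$ contains both iff its index window $\{k,k+1,\dots,k+l-1\}$ (mod $N$) contains both $1$ and $1+r$, which happens exactly for $k\in\{1+r-l+1,\dots,1\}$ wrapped mod $N$ — that is $l-r$ consecutive values — using $l\le N/2$ so that no window wraps around to cover the pair from "the other side" and no double counting occurs. When $r\ge l$ no window of length $l$ can cover both, giving $0$. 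The hypothesis $l\le N/2$ is what guarantees $m$ is well-defined (no overlap of the two branches $r\le l-1$ and $N-r\le l-1$) and that the count is unambiguous. I would write this count out carefully, then state the circulant eigenvalue formula as a known fact and substitute.
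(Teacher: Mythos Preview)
Your proposal is correct and follows essentially the same approach as the paper: you identify that every vertex has degree $l$ so that $L=\id-\tfrac{1}{l}A$, recognize $A$ as a symmetric circulant matrix with symbol $-m(r)$, and then invoke the standard circulant eigenvalue formula. Your version is in fact slightly more detailed than the paper's, since you spell out the combinatorial count giving $A_{ij}=-m(|i-j|_N)$ and explain how the symmetry $m(r)=m(N-r)$ collapses the exponentials to cosines, whereas the paper simply displays the matrix and cites the circulant result.
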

\begin{proof}By construction, all vertices have degree $l$. Therefore, by \cite[Remark 2.17]{MulasZhang}, proving the claim is equivalent to proving that the eigenvalues of the adjacency matrix are
\begin{equation*}
    \mu_i=-\sum_{r=1}^Nm(r)\cdot \cos \left(\frac{2\pi ir}{N}\right),\qquad \text{for }i=1,\ldots,N.
\end{equation*}Observe that the adjacency matrix can be written as

\begin{equation*}A=-\begin{bmatrix}
0 & l-1 & l-2 &\ldots & 1 & 0 & \ldots & 0 & 1 & \ldots & l-2 &l-1\\
l-1 & 0 & l-1 & l-2 & \ldots & 1 & 0 & \ldots & 0 & 1 & \ldots & l-2\\
l-2 & l-1 & 0 & \ddots & \ddots & \ddots & \ddots & \ddots &  & \ddots & \ddots & \vdots\\
\vdots &  \ddots& \ddots & \ddots & \ddots & \ddots & \ddots & \ddots & \ddots & & \ddots & 1\\
1 & & & & & & & & & & & 0\\
0 & 1 & & & & & & & & & \ddots & \vdots\\
\vdots & 0 & \ddots & & & & & & & & \ddots & 0\\
0 &  & \ddots& & & & & & & & & 1\\
1 & 0 & & & & & & & & \ddots & \ddots& \vdots\\
\vdots & \ddots & \ddots& & & & & & \ddots & \ddots & l-1 & l-2\\
l-2 & & \ddots & \ddots & & \ddots & \ddots & & & l-1 & 0 & l-1\\
l-1 & l-2 & \ldots & \ldots & 0 & \ldots & 0 & 1 & \ldots & \ldots & l-1 & 0 \\
\end{bmatrix}\end{equation*}
Therefore,
\begin{equation*}A=-\begin{bmatrix}
m(0) & m(N-1) & m(N-2) &\ldots & m(1)\\
m(1) & m(0) & m(N-1)  &\ldots & m(2)\\
m(2) & m(1) & m(0) &\ldots & m(3)\\
\vdots & \vdots & \vdots & \ddots & \vdots\\
m(N-1) & m(N-2) & m(N-3) & \ldots & m(0)
\end{bmatrix}\end{equation*} where \begin{itemize}
    \item $m(r):=l-r$ for all $r\in \{1,\ldots,l-1\}$
    \item $m(N-k):=m(k)=l-k$ for all $k\in \{1,\ldots,l-1\}$
    \item $m:=0$ otherwise.
\end{itemize}
Hence, $A$ is a (symmetric) circulant matrix. By \cite{circulant}, the eigenvalues of $A$ are 
\begin{equation*}
    \mu_i=-\sum_{r=1}^Nm(r)\cdot \cos \left(\frac{2\pi ir}{N}\right),\qquad \text{for }i=1,\ldots,N.
\end{equation*}This proves the claim.
\end{proof}
 	\begin{figure}[t!]
			\begin{center}
        \includegraphics[width=5cm]{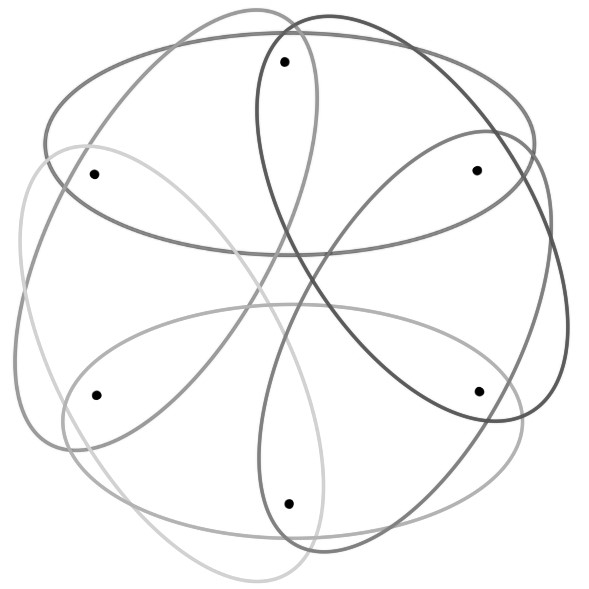}
        \caption{The $3$-hypercycle on $6$ nodes.}        \label{fig:cycle}
        \end{center}
    \end{figure}

\bibliography{Signless7.01.21}	

\end{document}